\theoremstyle{plain}
\newtheorem{theorem}{Theorem} [section]
\newtheorem{corollary}[theorem]{Corollary}
\newtheorem{lemma}[theorem]{Lemma}
\newtheorem{proposition}[theorem]{Proposition}
\theoremstyle{definition}
\newtheorem{definition}[theorem]{Definition}
\newtheorem{remark}[theorem]{Remark}
\newtheorem{example}[theorem]{Example}
\def\N{{\mathbb{N}}}
\def\R{{\mathbb{R}}}
\def\Z{{\mathbb{Z}}}
\def\clspan{{\overline{\mathrm{span}}}}
\def\fhat{{\hat{f}}}
\newcommand{\vp}{\varphi}
\newcommand{\w}{\omega}
\newcommand{\W}{\Omega}
\newcommand{\s}{\sigma}
\newcommand{\SR}{\mathcal{D}}
\newcommand{\SN}{\mathcal{N}}
\newcommand{\sn}{\overline{\text{span}}}
 \theoremstyle{plain}
 \theoremstyle{plain}
 \theoremstyle{plain}
 \theoremstyle{definition}
 \theoremstyle{remark}
\newtheorem{thm*}{Teorema}[section]
\newcommand{\CHI}{\hbox{\raise .4ex \hbox{$\chi$}}}
\newcommand{\set}[1]{\{#1\}}
\newcommand{\bigset}[1]{\bigl\{#1\bigr\}}
\newcommand{\dotoplus}{\operatornamewithlimits{\dot{\oplus}}}
\newcommand{\dotbigoplus}{\operatornamewithlimits{\dot{\bigoplus}}}
\def\subset{\subseteq}
\def\supp{{\textrm{supp}}}
\def\clspan{{\overline{\mathrm{span}}}}
\def\fhat{{\widehat{f}}}
\def\EQ{\, = \,}
\begin{document}

\title{Invariance of a Shift-Invariant Space in Several Variables}

\author[M. Anastasio, C. Cabrelli, and V. Paternostro]{M. Anastasio,
C. Cabrelli, and V. Paternostro}

\address{\textrm{(M. Anastasio)}
Departamento de Matem\'atica,
Facultad de Ciencias Exac\-tas y Naturales,
Universidad de Buenos Aires, Ciudad Universitaria, Pabell\'on I,
1428 Buenos Aires, Argentina and
CONICET, Consejo Nacional de Investigaciones
Cient\'ificas y T\'ecnicas, Argentina}
\email{manastas@dm.uba.ar}

\address{\textrm{(C. Cabrelli)}
Departamento de Matem\'atica,
Facultad de Ciencias Exac\-tas y Naturales,
Universidad de Buenos Aires, Ciudad Universitaria, Pabell\'on I,
1428 Buenos Aires, Argentina and
CONICET, Consejo Nacional de Investigaciones
Cient\'ificas y T\'ecnicas, Argentina}
\email{cabrelli@dm.uba.ar}

\address{\textrm{(V.Paternostro)}
Departamento de Matem\'atica,
Facultad de Ciencias Exac\-tas y Naturales,
Universidad de Buenos Aires, Ciudad Universitaria, Pabell\'on I,
1428 Buenos Aires, Argentina and
CONICET, Consejo Nacional de Investigaciones
Cient\'ificas y T\'ecnicas, Argentina}
\email{vpater@dm.uba.ar}

 \thanks{The research of
 the authors is partially supported by
Grants: ANPCyT, PICT 2006--177, CONICET, PIP 5650, UBACyT X058 and X108.
}

\subjclass[2000]{Primary 42C40; Secondary 42C15, 46C99}

\keywords{Shift-invariant space, translation invariant space,
range functions, fibers}

\date{\today}
\maketitle
\begin{abstract}
In this article we study invariance properties of shift-invariant spaces in higher dimensions.
We state and prove several necessary and sufficient conditions for a shift-invariant space to be invariant under a given closed subgroup of $\R^d$, and prove the existence of shift-invariant
spaces that are exactly invariant  for  each given subgroup. As an application we  relate the extra invariance to the size of support of the Fourier transform of the generators of the shift-invariant space.

This work extends recent results obtained for the case of one variable to several variables.

\end{abstract}

\section{Introduction}

A {\it shift-invariant space} (SIS) of $L^2(\R)$ is a closed subspace that is invariant
under translations by integers.
These spaces are important in approximation theory, wavelets, sampling and frames.
They also serve as  models in many applications in signal and image processing.

An important and interesting question regarding these spaces is whether they have the property to be invariant under translations other than integers.
A limit case is when the space is invariant under translations by all real numbers. In this case the space is called {\it translation invariant}.
However there exist  shift-invariant spaces with some {\it extra} invariance that are not necessarily translation invariant. That is, there are some intermediate cases between shift-invariance and translation invariance. The question is then, how can we identify them?

Recently, Hogan and Lakey defined the {\it discrepancy} of a shift-invariant space as a way to quantify
the {\it non-translation invariance} of the subspace, (see  \cite{HL05}). The discrepancy measures how far a unitary norm function of the subspace, can move away from it, when translated by non integers.
A translation invariant space has discrepancy zero.

In another direction, Aldroubi et al, see \cite{ACHKM} studied  shift-invariant spaces of $L^2(\R)$
that have some extra invariance. They  show that if $S$ is a shift-invariant space, then its {\it invariance set}, is a closed additive subgroup of $\R$  containing $\Z.$ The invariance set associated to  a shift-invariant space is the set $M$ of real numbers  satisfying that  for each $p \in M$ the translations by $p$ of every function in $S$,  belongs to $S$.
As a consequence, since every additive subgroup of $\R$ is either discrete or dense,  there are only two
possibilities left for the extra invariance.  That is, either  $S$ is invariant under translations by the group $(1/n)\Z$, for some positive integer $n$ (and not invariant under any bigger subgroup) or it is translation invariant.
They found different characterizations, in terms of the Fourier transform, of when a shift invariant space is $(1/n)\Z$-invariant.

A natural question  arises in this context.  Are the characterizations of extra invariance that hold on the line,  still valid in several variables?

A shift-invariant space in $L^2(\R^d)$ is a closed subspace that is invariant under translations
by the group $\Z^d.$
 The invariance set  $M \subset \R^d$  associated to a shift-invariant space $S$, that is, the set of vectors that leave $S$ invariant when translated  by its elements, is again, as in the 1-dimensional case,  a closed subgroup of $\R^d$ (see Proposition \ref{M-isgroup}).
 The problem of the extra invariance can then be  reformulated  as finding necessary and sufficient
 conditions for   a shift-invariant space to be  invariant
under a closed additive subgroup $M \subset \R^d$ containing $\Z^d.$

The main difference here with the one dimensional case, is that the structure of the subgroups of $\R^d$ when  $d$ is bigger than one, is not as simple.

The results obtained for the 1-dimensional case translate very well in the case in which the invariance set  $M$  is a lattice, (i.e. a discrete group)
or when $M$ is dense, that is  $M=\R^d$.
However, there are  subgroups of $\R^d$ that are neither discrete nor dense.
So, can there exist  shift-invariant spaces which are $M$-invariant for  such a subgroup $M$ and are not translation invariant?

 In this paper we studied the extra invariance of shift-invariant spaces in higher dimensions.
We were able to  obtain several characterizations paralleling  the 1-dimensional results. In addition our results show the existence of shift-invariant spaces that are {\it exactly} $M$-invariant for every closed subgroup $M\subset  \R^d$ containing $\Z^d$.
 By `exactly $M$-invariant' we mean that they are not invariant under any other subgroup containing $M.$
 We apply our results to obtain estimates on the size of the support of the Fourier transform of the generators of the space.

 The paper is organized in the following way.
Section \ref{sec-1} contains some notations, definitions and preliminary results that will be needed throughout. We review the structure of closed additive subgroups of $\R^d$ in
Section \ref{sec-2}. In Section \ref{sec-3} we extend some results, known for shift-invariant spaces in $\R^d$, to $M$-invariant spaces when $M$ is a closed subgroup of $\R^d$ containing $\Z^d.$
 The necessary and sufficient conditions for the $M$-invariance of shift-invariant spaces are
 stated and proved in Section \ref{sec-4}. Finally, Section \ref{sec-5} contains some applications of our results.

\section{Preliminaries}\label{sec-1}

\subsection{Notation and Definitions}\noindent

Given a subspace $W$ of a Hilbert space $H$, we
denote by $\overline{W}$ its closure and by $W^{\perp}$
its orthogonal complement.
The inner product in $H$ will be denoted by $\langle\cdot,\cdot\rangle$.

We normalize the Fourier transform of $f \in L^1(\R^d)$ as
$$\fhat(\omega) \EQ \int_{\R^d} f(x) \, e^{-2\pi i\langle\omega, x\rangle} dx.$$
The Fourier transform extends to a unitary operator on $L^2(\R^d)$.
Given $\Phi \subseteq L^2(\R^d)$, we set
$\widehat \Phi = \set{\fhat : f \in \Phi}$.

For $y\in\R^d$, we write $e^{-2\pi i \langle \omega, y\rangle} $ as $e_y(\w)$ and the translation operator $t_y$ is $t_y f(x) = f(x-y)$.
Note that $\widehat{(t_y f)}(\omega) = e_y(\w)\fhat(\omega)$.

Let $G$ be a subset of $\R^d$, we will say that a function $f$ defined in $\R^d$ is $G$-periodic
if $t_xf=f$ for all $x\in G$. A subset $A\subset\R^d$ is $G$-periodic if its indicator function (denoted
by $\chi_A$) is $G$-periodic.

A \emph{shift-invariant space} (SIS) is a closed subspace $S$ of $L^2(\R^d)$
such that $t_kf\in S$ for every $k\in\Z^d$ and $f\in S$.

Given $\Phi \subseteq L^2(\R^d)$, we define
$$E(\Phi)
\EQ \set{t_k \varphi: \varphi \in \Phi, \, k \in \Z^d}.$$
The SIS generated by $\Phi$ is
$$S(\Phi)
\EQ \clspan\,E(\Phi)
\EQ \clspan\set{t_k \varphi: \varphi \in \Phi, \, k \in \Z^d}.$$
We call $\Phi$ a \emph{set of generators} for $S(\Phi)$.
When $\Phi = \set{\varphi}$, we simply write $S(\varphi)$.

The \emph{length} of a shift-invariant space $S$ is the minimum cardinality of the
sets $\Phi$ such that $S = S(\Phi)$.
A SIS of length one is called a \emph{principal} SIS.
A SIS of finite length is a \emph{finitely generated} SIS (FSIS).

We will write $W = U \dotoplus V$ to denote the \emph{orthogonal} direct
sum of closed subspaces of $L^2(\R^d)$,
i.e., the subspaces $U$, $V$ must be closed and orthogonal, and~$W$
is their direct sum.

The Lebesgue measure of a set $E \subseteq \R^d$ is denoted by $|E|$.

The cardinality of a finite set $F$ is denoted by $\#F$.

\subsection{The invariance set}\noindent

Let $S\subset L^2(\R^d)$ be a SIS. We define the set
\begin{equation}\label{M}
M:=\{x\in\R^d\,:\,t_x f\in S,\,\,\,\forall\,f\in S\}.
\end{equation}
If $\Phi$ is a set of generators for $S$, it is easy to check that,  $x\in M$ if and only if  for all $\varphi\in \Phi$, $t_x\varphi\in S$.

In case that $M=\R^d$, Wiener's theorem (see \cite{Hel64}, \cite{Sri64}) states that there exists
a measurable set $E\subset\R^d$
satisfying $$S=\{f\in L^2(\R^d)\,:\, \supp(\widehat{f}\,)\subseteq E\}.$$

We want to characterize $S$ when $M$ is not all $\R^d$. We will first study the structure of the set $M$.

\begin{proposition}\label{M-isgroup}

Let $S$ be a SIS of $L^2(\R^d)$ and let $M$ be defined as in (\ref{M}).
Then $M$ is an additive closed subgroup of $\R^d$ containing $\Z^d$.

\end{proposition}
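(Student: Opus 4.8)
The plan is to verify the three defining properties of a closed subgroup directly from the definition of $M$ in~\eqref{M}, using the fact that $S$ is a closed shift-invariant subspace.

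First I would check that $\Z^d \subseteq M$, which is immediate: by the very definition of a SIS, $t_k f \in S$ for every $k \in \Z^d$ and every $f \in S$, so each $k \in \Z^d$ lies in $M$. In particular $0 \in M$, so $M$ is nonempty. Next I would verify the group structure. For closure under addition, suppose $x, y \in M$ and $f \in S$. Since $t_{x+y} f = t_x(t_y f)$ and $t_y f \in S$ because $y \in M$, applying $x \in M$ to the element $t_y f \in S$ gives $t_x(t_y f) \in S$, so $x+y \in M$. For inverses, suppose $x \in M$; I want $-x \in M$. The cleanest argument is to note that $t_x : S \to S$ is an isometry (translation is unitary on $L^2(\R^d)$) that maps $S$ into itself, and I would argue it is in fact onto $S$, so that its inverse $t_{-x} = t_x^{-1}$ also maps $S$ into $S$, giving $-x \in M$. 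Surjectivity follows because $t_x$ maps $S$ isometrically onto a closed subspace $t_x(S) \subseteq S$; combined with the earlier observation that $\Z^d \subseteq M$, one sees that $S$ is generated as a SIS by $t_x$-images, and the isometry being onto a subspace containing a generating set forces $t_x(S) = S$. Alternatively, and more transparently, I can use closedness of $M$ (proved next) together with the semigroup property: since $x \in M$ implies $nx \in M$ for all positive integers $n$, and $M$ is closed, one can recover $-x$ as a limit; but the direct isometry argument is preferable, so I would pursue that first.

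For closedness, I would take a sequence $x_n \in M$ with $x_n \to x$ in $\R^d$ and show $x \in M$. Fix $f \in S$; then $t_{x_n} f \in S$ for each $n$. Because $S$ is a closed subspace, it suffices to prove $t_{x_n} f \to t_x f$ in $L^2(\R^d)$, for then $t_x f$ is a limit of elements of $S$ and hence lies in $S$. The convergence $t_{x_n} f \to t_x f$ is the standard strong continuity of the translation action on $L^2(\R^d)$: translation is continuous in the $L^2$ norm as a function of the shift parameter. I would justify this by approximating $f$ by a compactly supported continuous function (which are dense in $L^2$) and using uniform continuity, or simply by citing strong continuity of the regular representation.

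The main obstacle I anticipate is the argument for inverses, i.e. showing $-x \in M$. The forward properties (containing $\Z^d$, closure under addition, topological closedness) are routine consequences of the semigroup structure and $L^2$-continuity of translation. The subtlety is that $M$ is defined as a one-sided condition ($t_x$ maps $S$ into $S$), and turning a ``maps into'' statement into a genuine group requires knowing that the map is onto. I would handle this by exploiting that each $t_x$ is a unitary operator on all of $L^2(\R^d)$ and that $S$ is invariant, so $t_x|_S$ is an isometry of $S$ into itself whose range is a closed subspace; the key point to nail down is that this range is all of $S$, which I would establish either via the surjectivity argument above or, as a clean fallback, by combining $nx \in M$ for all $n \ge 1$ with the closedness of $M$ to extract $-x$ as a limit of points of the form $m x$ along a suitable subsequence. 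Either route closes the gap and completes the proof that $M$ is a closed subgroup containing $\Z^d$.
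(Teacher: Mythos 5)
Your treatment of three of the four points ($\Z^d \subseteq M$, closure under addition, and topological closedness via strong continuity of translation) is correct and coincides with the paper's own proof. The genuine gap is exactly where you anticipated it: producing inverses, and neither of your two routes closes it. Route (a) is circular. To conclude $t_x(S)=S$ you assert that the closed subspace $t_x(S)$ contains a generating set of $S$; but a generating set consists of integer translates $t_k\varphi$, and $t_k\varphi \in t_x(S)$ means precisely that $t_{k-x}\varphi \in S$, which is (up to an integer shift) the statement $-x\in M$ you are trying to prove. No soft argument can substitute here, because an isometry of a closed subspace into itself genuinely can fail to be onto: $t_1$ maps $\{f \in L^2(\R) : f = 0 \text{ a.e.\ on } (-\infty,0)\}$ isometrically onto a \emph{proper} closed subspace of itself. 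That subspace is not a SIS, which is the point --- any correct proof must use the hypothesis $\Z^d \subseteq M$ in an essential way, and your route (a) never really does.

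Route (b) as literally stated is false: if $x \neq 0$ then $\|mx\| \to \infty$, so no subsequence of $\{mx\}_{m \geq 1}$ converges at all, let alone to $-x$. The missing idea --- and the entire content of the paper's Lemma~\ref{semi} --- is to work modulo $\Z^d$. Since $nx \in M$ for all $n \geq 1$, since $\Z^d \subseteq M$, and since $M$ is closed under addition, every point $nx + k$ with $k \in \Z^d$ lies in $M$. The reductions $nx \bmod \Z^d$ live in the compact torus $\mathbb{T}^d = \R^d/\Z^d$, so some subsequence $n_j x$ converges modulo $\Z^d$; then $(n_{j+1}-n_j)x \to 0$ and $(n_{j+1}-n_j-1)x \to -x$ modulo $\Z^d$, i.e.\ $-x$ is a limit of points $(n_{j+1}-n_j-1)x + k_j$ with $k_j \in \Z^d$, all of which lie in $M$; closedness of $M$ then yields $-x \in M$. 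The paper packages this as follows: the quotient map $\pi:\R^d \to \mathbb{T}^d$ satisfies $\pi^{-1}(\pi(M)) = M + \Z^d = M$, so $\pi(M)$ is closed, hence compact, and a compact semigroup in $\mathbb{T}^d$ is automatically a group (Hewitt--Ross, Theorem 9.16); pulling back shows $M$ is a group. Supplying either this subsequence argument or that citation is what your proposal needs in order to be complete.
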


For the proof of this proposition we will need the following lemma.
Recall that an additive  semigroup is a non-empty  set with an associative additive operation.
\begin{lemma}\label{semi}
Let $H$ be a closed semigroup of $\R^d$ containing $\Z^d$, then $H$ is a group.
\end{lemma}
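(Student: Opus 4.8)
The plan is to prove that $H$ is a group by showing that every $x \in H$ has its additive inverse $-x$ in $H$. Since $0 \in \Z^d \subseteq H$ serves as the identity, the operation is associative by hypothesis, and $H$ is closed under addition (being a semigroup), the only missing group axiom is the existence of inverses. Because $H$ is topologically closed, it suffices to produce, for each $\e > 0$, an element $h \in H$ with $\norm{h + x} \le \e$, and then pass to the limit.

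First I would reduce this to an approximation statement. For every integer $n \ge 1$ the point $nx$ is a sum of $n$ copies of $x$ and hence lies in $H$; since also $\Z^d \subseteq H$, every point of the form $nx + k$ with $n \ge 1$ and $k \in \Z^d$ belongs to $H$. Thus it is enough to find, for each $\e$, an integer $p \ge 2$ and a lattice point $k \in \Z^d$ with $\norm{px + k} \le \e$. Indeed, given such $p$ and $k$, the element $h := (p-1)x + k = (px + k) - x$ lies in $H$, because it is a sum of the $p-1 \ge 1$ copies of $x$ together with $k$, and it satisfies $\norm{h + x} = \norm{px + k} \le \e$, as required.

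The core step is a pigeonhole argument on the torus. Consider the reductions modulo $\Z^d$ of the points $x, 2x, \dots, Nx$ inside the compact fundamental domain $[0,1)^d$. Cover $[0,1)^d$ by finitely many sets of diameter at most $\e/2$ and take $N$ larger than the number of such sets; then two of the reductions lie in a common set, so there exist $1 \le n < m \le N$ and $\kappa \in \Z^d$ with $\norm{(m-n)x + \kappa} \le \e/2$. Setting $q = m - n \ge 1$, and then $p = 2q \ge 2$ with $k = 2\kappa \in \Z^d$, gives $\norm{px + k} = 2\,\norm{qx + \kappa} \le \e$, exactly the approximation needed above. Doubling $q$ is what guarantees $p \ge 2$, ensuring that $(p-1)x$ is a genuine sum of elements of the semigroup.

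The only delicate point is this equidistribution-type estimate; conceptually it asserts that the forward orbit $\{\,nx \bmod \Z^d : n \ge 1\,\}$ accumulates at the origin of $\R^d/\Z^d$, and the pigeonhole principle delivers precisely this without invoking the full structure theory of closures of monothetic subgroups of the torus. Everything else—closure under addition, membership of $0$, and the final passage to the limit using that $H$ is closed—is routine, so I expect no further obstacle.
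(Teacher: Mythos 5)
Your proof is correct, and it takes a genuinely different route from the paper. The paper also passes to the torus, but only to make a compactness argument: since $H+\Z^d=H$, the image $\pi(H)$ under the quotient map $\pi:\R^d\to\R^d/\Z^d$ satisfies $\pi^{-1}(\pi(H))=H$, hence is closed and therefore compact, and the paper then cites Hewitt--Ross (Theorem 9.16) for the fact that a compact semigroup in $\mathbb{T}^d$ is a group, pulling the conclusion back to $H$. You instead prove the existence of inverses directly: the pigeonhole recurrence of the orbit $\{nx \bmod \Z^d\}$ produces, for every $\e>0$, an element $(p-1)x+k\in H$ within $\e$ of $-x$, and closedness of $H$ finishes the job. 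In effect you have given a self-contained, elementary proof of exactly the special case of the Hewitt--Ross theorem that the paper invokes, at the cost of a somewhat longer argument; the paper's version is shorter but rests on an external structural result. Your steps all check: the lattice-point estimate $\norm{(m-n)x+\kappa}\le\e/2$ follows correctly from two reductions landing in a common set of small diameter, and the doubling $p=2q$ does guarantee $p-1\ge 1$ --- though this last precaution is not strictly needed, since $0\in\Z^d\subseteq H$ makes even the case $p=1$ (where $h=k\in\Z^d$) legitimate.
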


\begin{proof}

Let $\pi$ be the quotient map from $\R^d$ onto $\mathbb{T}^d=\R^d/\Z^d$.
Since  $H$ is a semigroup containing $\Z^d$, we have that $H+\Z^d=H$, thus
$$
\pi^{-1}(\pi(H))=\bigcup_{h\in H}h+\Z^d=H+\Z^d=H.
$$
This shows that $\pi(H)$ is closed in $\mathbb{T}^d$ and therefore compact.

By \cite[Theorem 9.16]{HR79}, we have that a compact semigroup of $\mathbb{T}^d$ is necessarily a group,
thus  $\pi(H)$ is a group and consequently $H$ is a group.

\end{proof}

\begin{proof}[\textit{ Proof of Proposition \ref{M-isgroup}}]

Since $S$ is a SIS, $\Z^d\subset M$.

We now show that $M$ is closed.
Let $x_0\in \R^d$ and $\{x_n\}_{n\in \N}\subset M$,
such that  $\lim_{n\to\infty}x_n=x_0$.

Then
$$\lim_{n\to\infty}\|t_{x_n}f-t_{x_0}f\|=0.$$
Therefore, $t_{x_0}f\in \overline{S}$.
But $S$ is closed, so $t_{x_0}f\in S$.

It is easy to check that $M$ is a semigroup of $\R^d$, hence we conclude from Lemma \ref{semi}
that $M$ is a group.

\end{proof}

In what follows, we will give some characterizations concerning closed subgroups of $\R^d$.

\section{Closed subgroups of $\R^d$}\label{sec-2}

Throughout this section we describe the additive closed subgroups of $\R^d$ containing $\Z^d$. We first study closed subgroups of $\R^d$ in general.

When two groups $G_1$ and $G_2$ are isomorphic we will write  $G_1\approx G_2$.
Here and subsequently all the  vector subspaces will be  real.

\subsection{General case}\noindent

We will state in this section, some basic definitions and properties of closed subgroups of $\R^d$,
for a detailed treatment and proofs we refer the reader to \cite{Bou74}.

\begin{definition}
Given $M$ a subgroup of $\R^d$, the \textit{range} of M, denoted by $\textbf{r}(M)$, is the dimension
of the subspace generated by $M$ as a real vector space.
\end{definition}

It is known that every closed subgroup of $\R^d$ is either discrete or contains a subspace of at least dimension one
(see \cite[Proposition 3]{Bou74}).

\begin{definition}
Given $M$ a closed subgroup of $\R^d$, there exists a subspace $V$ whose dimension is the largest of the dimensions of all the subspaces contained in $M$.
We will denote by $\mathbf{d}(M)$ the dimension of $V$. Note that  $\mathbf{d}(M)$ can be zero.
\label{mayor-subepacio}
\end{definition}

Note that $0\leq\mathbf{d}(M)\leq\textbf{r}(M)\leq d$.

The next theorem establishes that
every closed subgroup of $\R^d$ is the direct sum of a subspace and a discrete group.

\begin{theorem}

Let $M$ be a closed subgroup of $\R^d$ such that $\textnormal{\textbf{r}}(M)=r$ and $\mathbf{d}(M)=p$. Let $V$ be the subspace
contained in $M$ as in Definition \ref{mayor-subepacio}. There exists a basis $\{u_1,\ldots,u_d\}$ for  $\R^d$ such that $\{u_1,\ldots,u_r\}\subset M$
and $\{u_1,\ldots,u_p\}$ is a basis for $V$.
Furthermore,
$$M=\Big\{\sum_{i=1}^p t_i u_i+\sum_{j=p+1}^r n_j u_j\,:\,t_i\in\R,\,n_j\in\Z\Big\}.$$

\end{theorem}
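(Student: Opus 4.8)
The plan is to reduce everything to the case of a discrete group by quotienting out the maximal subspace $V$. Since $V\subseteq M$ and $M$ is a group, we have $M+V=M$; hence, writing $q\colon\R^d\to\R^d/V$ for the canonical projection, $q^{-1}(q(M))=M$. As $\R^d/V$ carries the quotient topology and $M$ is closed, this identity shows that $q(M)$ is a closed subgroup of the $(d-p)$-dimensional vector space $\R^d/V$.

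The heart of the argument is to show that $q(M)$ is discrete, and this is where the definition of $\mathbf{d}(M)=p$ is used. Suppose not; by the dichotomy recalled above (a closed subgroup is discrete or contains a line), $q(M)$ would contain a subspace $W$ with $\dim W\geq 1$. For each $w\in W$ pick $m\in M$ with $q(m)=w$; since $M+V=M$, the entire coset $q^{-1}(w)=m+V$ lies in $M$, so $q^{-1}(W)\subseteq M$. But $q^{-1}(W)$ is a subspace of dimension $p+\dim W>p$ contained in $M$, contradicting the maximality of $V$. Therefore $q(M)$ is discrete.

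A discrete subgroup of a finite-dimensional real vector space is free abelian, admitting a basis that is linearly independent over $\R$, with rank equal to the dimension of its $\R$-span. Since $\mathrm{span}(M)$ has dimension $r$ and contains $V$, its image $\mathrm{span}(q(M))=q(\mathrm{span}(M))$ has dimension $r-p$; hence $q(M)$ has rank $r-p$ and an $\R$-independent basis $\bar u_{p+1},\dots,\bar u_r$. I would lift each $\bar u_j$ to a vector $u_j\in M$ and choose a basis $u_1,\dots,u_p$ of $V$. The family $u_1,\dots,u_r$ is linearly independent---the first $p$ span $V=\ker q$, while the images of the rest are independent modulo $V$---and I extend it to a basis $u_1,\dots,u_d$ of $\R^d$, which has the stated properties.

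It remains to verify the formula, and both inclusions are routine. The right-hand side is contained in $M$ because $u_1,\dots,u_p\in V\subseteq M$, real combinations of them stay in $V$, $u_{p+1},\dots,u_r\in M$, and $M$ is a group. Conversely, for $m\in M$ write $q(m)=\sum_{j=p+1}^{r}n_j\bar u_j$ with $n_j\in\Z$; then $m-\sum_{j=p+1}^{r}n_j u_j\in\ker q=V$, so it equals $\sum_{i=1}^{p}t_i u_i$ for suitable $t_i\in\R$, yielding the claimed representation. The only genuinely nontrivial step is the discreteness of $q(M)$; once that is in hand, the remainder is linear algebra together with the standard structure theory of lattices.
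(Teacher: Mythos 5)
Your argument is correct, but there is nothing in the paper to compare it against: the paper states this theorem without proof, deferring the whole of Section \ref{sec-2} to Bourbaki \cite{Bou74}. What you have written is essentially the classical argument, and its one genuinely nontrivial step --- discreteness of the image $q(M)$ in $\R^d/V$ --- is handled correctly: closedness of $q(M)$ follows from $q^{-1}(q(M))=M+V=M$ (the same device the paper uses for the torus in Lemma \ref{semi}), and discreteness follows from the maximality in Definition \ref{mayor-subepacio}, since a nontrivial subspace $W\subseteq q(M)$ would pull back to a subspace $q^{-1}(W)\subseteq M$ of dimension $p+\dim W>p$. The lifting of a $\Z$-basis of $q(M)$, the linear independence of $u_1,\dots,u_r$, and the two inclusions in the displayed formula are all routine and correctly verified. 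Your proof rests on two external inputs: the dichotomy that a closed subgroup of a Euclidean space is discrete or contains a line, which the paper itself quotes from \cite{Bou74} (you apply it in $\R^d/V\cong\R^{d-p}$, a harmless identification), and the structure theorem for discrete subgroups of $\R^n$ (free abelian, generated by $\R$-linearly independent vectors, of rank equal to the dimension of their span), which you invoke without proof. Given that the paper cites Bourbaki for strictly more than this, that reliance is consistent with the paper's own standard of rigor, but it is worth being aware that this lattice structure theorem is the one ingredient of comparable depth that your argument takes for granted; everything else you supply yourself.
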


\begin{corollary}

If $M$ is a closed subgroup of $\R^d$ such that $\textnormal{\textbf{r}}(M)=r$ and $\mathbf{d}(M)=p$, then
$$M\approx\R^p\times\Z^{r-p}.$$
\label{tl}
\end{corollary}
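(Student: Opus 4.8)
The plan is to read the isomorphism directly off the structure theorem immediately preceding this corollary. That theorem provides a basis $\{u_1,\ldots,u_d\}$ of $\R^d$ with $\{u_1,\ldots,u_r\}\subset M$, together with the explicit parametrization
$$M=\Big\{\sum_{i=1}^p t_i u_i+\sum_{j=p+1}^r n_j u_j\,:\,t_i\in\R,\,n_j\in\Z\Big\}.$$
The natural candidate for the isomorphism is therefore the coordinate map $\Psi:\R^p\times\Z^{r-p}\to M$ given by
$$\Psi(t_1,\ldots,t_p,n_{p+1},\ldots,n_r)=\sum_{i=1}^p t_i u_i+\sum_{j=p+1}^r n_j u_j,$$
and the task reduces to verifying that $\Psi$ is a well-defined bijective group homomorphism.

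First I would check that $\Psi$ is a group homomorphism. Since addition in $\R^p\times\Z^{r-p}$ is componentwise and the defining expression is linear in the coordinates $t_i$ and $n_j$, the identity $\Psi(x+y)=\Psi(x)+\Psi(y)$ holds immediately. Surjectivity is then nothing more than a restatement of the displayed description of $M$: by the theorem every element of $M$ has exactly the stated form, so it lies in the image of $\Psi$.

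The only point that genuinely uses the hypotheses is injectivity. Suppose $\Psi(t_1,\ldots,t_p,n_{p+1},\ldots,n_r)=0$, i.e. $\sum_{i=1}^p t_i u_i+\sum_{j=p+1}^r n_j u_j=0$. This is a linear relation among $u_1,\ldots,u_r$, which are part of a basis of $\R^d$ and hence linearly independent, forcing every $t_i=0$ and every $n_j=0$. Thus $\ker\Psi$ is trivial, $\Psi$ is injective, and combining the three properties yields the desired isomorphism $M\approx\R^p\times\Z^{r-p}$.

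Since the argument is a direct transcription of the structure theorem, I do not expect any real obstacle. The one place to be careful is the injectivity step: it must invoke the linear independence of the full set $\{u_1,\ldots,u_r\}$ — guaranteed by their being members of a basis of $\R^d$ — and not merely the independence of the subspace generators $\{u_1,\ldots,u_p\}$, since otherwise the discrete coordinates $n_j$ would not be pinned down.
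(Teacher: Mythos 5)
Your proof is correct and is essentially the argument the paper has in mind: the corollary is stated there without proof precisely because it reads off directly from the parametrization in the preceding structure theorem, exactly as you do via the coordinate map $\Psi$, with injectivity resting on the linear independence of $\{u_1,\ldots,u_r\}$ as part of a basis of $\R^d$. The only point you might add is that if $\approx$ is understood as isomorphism of \emph{topological} groups (the natural reading for closed subgroups of $\R^d$), then continuity of $\Psi$ and of $\Psi^{-1}$ must also be noted, but both are immediate since they are restrictions of linear maps ($\Psi^{-1}$ being given by the coordinate functionals of the basis $\{u_1,\ldots,u_d\}$).
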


\subsection{Closed subgroups of $\R^d$ containing $\Z^d$}
\noindent

We are interested in  closed subgroups of $\R^d$ containing $\Z^d$.
For their understanding, the  notion of dual group is important.

\begin{definition}
Let $M$ be a subgroup of $\R^d$. Consider the set
$$M^*:=\{x\in \R^d\,:\, \langle x,m\rangle\in\Z\quad\forall\,m\in M\}.$$
Then $M^*$ is a subgroup of $\R^d$ called the \textit{dual group} of $M$.
In particular, $(\Z^d)^*=\Z^d$.
\end{definition}

Now we will  list some properties of the dual group.

\begin{proposition}
Let $M,N$ be subgroups of $\R^d$.
\begin{enumerate}
\item [i)] $M^*$ is a closed subgroup of $\R^d$.
\item [ii)]If $N\subset M$, then
$M^*\subset N^*$.
\item [iii)] If $M$ is closed, then $\textnormal{\textbf{r}}(M^*)=d-\mathbf{d}(M)$ and $\mathbf{d}(M^*)=d-\textnormal{\textbf{r}}(M)$.
\item [iv)] $(M^*)^*=\overline{M}$.
\end{enumerate}
\label{propM*}
\end{proposition}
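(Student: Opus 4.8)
The plan is to treat the four items in order, getting (i) and (ii) directly from the definition and the continuity/linearity of the pairing, and then reducing (iii) and (iv) to a single coordinate computation built on the structure theorem of the previous subsection.

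For (i), I would write $M^*=\bigcap_{m\in M}\{x\in\R^d:\langle x,m\rangle\in\Z\}$. For each fixed $m$ the functional $x\mapsto\langle x,m\rangle$ is continuous and $\Z$ is closed in $\R$, so every set in the intersection is closed, hence so is $M^*$; the subgroup property is immediate from linearity, since $0\in M^*$ and $\langle x-y,m\rangle=\langle x,m\rangle-\langle y,m\rangle\in\Z$ whenever $x,y\in M^*$. Part (ii) is then just the observation that enlarging the constraint set from $N$ to $M\supset N$ can only shrink the solution set, so $M^*\subset N^*$.

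The real content is (iii), and this is the step I expect to be the main obstacle because it requires careful bookkeeping. Invoking the structure theorem with $\mathbf{r}(M)=r$ and $\mathbf{d}(M)=p$, I obtain a basis $\{u_1,\dots,u_d\}$ of $\R^d$ with $M=\bigset{\sum_{i=1}^p t_iu_i+\sum_{j=p+1}^r n_ju_j:t_i\in\R,\ n_j\in\Z}$. I would introduce the dual basis $\{u_1^*,\dots,u_d^*\}$ determined by $\langle u_i^*,u_j\rangle=\delta_{ij}$ (which exists since the inner product is nondegenerate), write a general $x=\sum_{k=1}^d s_k u_k^*$, and evaluate $\langle x,m\rangle=\sum_{i=1}^p t_is_i+\sum_{j=p+1}^r n_js_j$ for $m\in M$. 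Testing this against the three kinds of directions forces: $s_1=\dots=s_p=0$ (the continuous directions, since $t_is_i\in\Z$ for all $t_i\in\R$ means $s_i=0$); $s_{p+1},\dots,s_r\in\Z$ (the discrete directions, taking $n_j=1$); and $s_{r+1},\dots,s_d$ free (these directions do not appear in any $m\in M$). Hence $M^*$ has the same structural form in the dual basis, with maximal subspace $\Span\{u_{r+1}^*,\dots,u_d^*\}$ of dimension $d-r$ and a discrete part along $u_{p+1}^*,\dots,u_r^*$. Reading off the two invariants gives $\mathbf{d}(M^*)=d-r=d-\mathbf{r}(M)$ and $\mathbf{r}(M^*)=(d-r)+(r-p)=d-p=d-\mathbf{d}(M)$.

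For (iv) I would first reduce to the closed case by showing $M^*=(\overline{M})^*$: the inclusion $\supset$ is (ii) applied to $M\subset\overline{M}$, while $\subset$ follows because the condition $\langle x,m\rangle\in\Z$ passes to limits (if $m_n\to\overline m$ with $m_n\in M$, then $\langle x,m_n\rangle\to\langle x,\overline m\rangle$ stays in the closed set $\Z$). So it suffices to prove $(M^*)^*=M$ when $M$ is closed. The inclusion $M\subset(M^*)^*$ is immediate: for $m\in M$ and every $x\in M^*$ we have $\langle m,x\rangle=\langle x,m\rangle\in\Z$, which is exactly the membership condition for $(M^*)^*$. For the reverse inclusion I would rerun the computation of the previous paragraph with $M^*$ in place of $M$; since the basis dual to $\{u_i^*\}$ is again $\{u_j\}$, the same recipe applied to the structural form of $M^*$ recovers precisely the set $\bigset{\sum_{i=1}^p c_iu_i+\sum_{j=p+1}^r c_ju_j:c_i\in\R,\ c_j\in\Z}=M$. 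Combining, $(M^*)^*=\bigl((\overline{M})^*\bigr)^*=\overline{M}$, which is the claim.
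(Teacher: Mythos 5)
Your proof is correct, but note that the paper itself does not prove this proposition at all: it is listed among the ``basic definitions and properties of closed subgroups of $\R^d$'' for which the authors explicitly defer to Bourbaki \cite{Bou74}. So there is no in-paper argument to compare against; what you have produced is a self-contained substitute. Your route is the natural one given what the paper does provide: items (i) and (ii) follow from continuity of $x\mapsto\langle x,m\rangle$ and monotonicity of intersections, and the substance of (iii) and (iv) is a single coordinate computation in the dual basis, applied once to the structural form of $M$ given by the paper's structure theorem (Corollary \ref{tl}'s parent theorem) and once more, symmetrically, to the resulting structural form of $M^*$. Two small points you should make explicit if this were to be inserted as a proof. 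First, in (iii), reading off $\mathbf{d}(M^*)$ requires knowing that the maximal subspace of $M^*$ is exactly $\Span\{u_{r+1}^*,\dots,u_d^*\}$; this follows because any subspace $W\subset M^*$ is closed under real scaling, so the (continuous) coordinate functionals along $u_{p+1}^*,\dots,u_r^*$ map $W$ into $\Z$ and hence vanish on $W$. Second, in (iv), your reverse inclusion $(M^*)^*\subset M$ for closed $M$ tacitly reuses the coordinate computation with the roles of the basis and its dual exchanged; this is legitimate precisely because $\langle u_i^*,u_j\rangle=\delta_{ij}$ is symmetric in the two families, as you observe. With those two remarks spelled out, the argument is complete and, unlike the paper's citation, requires nothing beyond the structure theorem already stated in Section \ref{sec-2}.
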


Let $H$ be a subgroup of $\Z^d$ with $\textnormal{\textbf{r}}(H)=q$, we will say that a set $\{v_1,\ldots,v_q\}\subset H$ is a
\textit{basis} for $H$
if for every  $x\in H$ there exist unique $k_1,\ldots,k_q\in\Z$ such that $$x=\sum_{i=1}^q k_iv_i.$$

Note that   $\{v_1,\ldots,v_d\}\subset \Z^d$  is a basis for $\Z^d$ if and only if the determinant of the
matrix $A$ which has  $\{v_1,\ldots,v_d\}$ as columns is 1 or $-1$.

Given  $B=\{v_1,\ldots,v_d\}$  a basis for $\Z^d$, we will call $\widetilde{B}=\{w_1,\ldots,w_d\}$ a {\it dual basis}
for  $B$ if $\langle v_i,w_j\rangle=\delta_{i,j}$ for all $1\leq i,j\leq d$.

If we denote by $\widetilde{A}$ the matrix with columns $\{w_1,\ldots,w_d\}$, the relation between
$B$ and $\widetilde{B}$ can be expressed in terms of matrices as $\widetilde{A}=(A^*)^{-1}$.

The closed subgroups $M$ of $\R^d$ containing $\Z^d$, can be described with the help of the dual relations. Since $\Z^d\subset M$, we have that $M^*\subset\Z^d$. So, we need first the characterization of the subgroups of $\Z^d$. This is stated in the following theorem.

\begin{theorem}\label{teogrz}
Let $H$ be a subgroup of $\Z^d$ with $\textnormal{\textbf{r}}(H)=q$, then there exist a basis $\{w_1,\ldots,w_d\}$ for $\Z^d$
and  unique integers $a_1,\ldots,a_q$ satisfying
$a_{i+1}\equiv 0\,(\text{mod. }a_i)$ for all $1\leq i\leq q-1$,
such that $\{a_1w_1,\ldots,a_q w_q\}$ is a basis for $H$.
The integers $a_1,\ldots,a_q$ are called \textnormal{invariant factors}.
\end{theorem}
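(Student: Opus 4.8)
The plan is to prove this classical ``stacked basis'' theorem for subgroups of the free abelian group $\Z^d$ by a minimality-and-descent argument for existence, and by a determinantal-divisor argument for the uniqueness of the $a_i$. Throughout I write $\mathrm{Hom}(\Z^d,\Z)$ for the group of $\Z$-linear functionals on $\Z^d$. First I would record that a subgroup $H\subset\Z^d$ is itself free abelian of finite rank $q=\textbf{r}(H)\le d$, so in particular $H$ is finitely generated; fixing a generating set $g_1,\dots,g_m$ and the coordinate functionals $e_1^*,\dots,e_d^*$, the integers $e_i^*(g_j)$ are the entries of a matrix $A$ whose columns generate $H$.

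The key quantity is $a_1:=\gcd\{\phi(h):\phi\in\mathrm{Hom}(\Z^d,\Z),\ h\in H\}$, which equals the gcd of all entries of $A$. Since every value $\phi(h)$ is a $\Z$-linear combination of these entries, $a_1\mid\phi(h)$ for all $\phi$ and all $h\in H$. The crucial step is to show that $a_1$ is actually \emph{attained} as a single value $\phi_1(v_1)=a_1$ with $v_1\in H$. I would do this by the Euclidean reduction underlying the Smith normal form: starting from any pair $(\phi,h)$ realizing a positive value, if that value does not already divide every other value, one replaces $\phi$ or $h$ by a suitable integer combination (as in $\gcd(m,n)=sm+tn$ when the two values share the functional or the element) to strictly decrease the positive value obtained; since the values are positive integers the process terminates, and the terminal pair $(\phi_1,v_1)$ realizes $a_1$. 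This attainment statement is exactly the main obstacle: combining functional values across \emph{different} functionals and \emph{different} elements simultaneously is what forces the descent argument rather than a one-line B\'ezout identity.

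Once $a_1=\phi_1(v_1)$ is attained, the rest proceeds cleanly. Because $a_1$ divides every coordinate $e_i^*(v_1)$, I may write $v_1=a_1w_1$ with $w_1\in\Z^d$ primitive and $\phi_1(w_1)=1$; this yields the splittings $\Z^d=\Z w_1\oplus\ker\phi_1$ and $H=\Z v_1\oplus(H\cap\ker\phi_1)$, using $\phi_1(H)=a_1\Z$ to project any $h\in H$ onto $\ker\phi_1$. Since $\ker\phi_1\approx\Z^{d-1}$ and $H\cap\ker\phi_1$ has rank $q-1$, induction on $d$ produces a basis $w_2,\dots,w_d$ of $\ker\phi_1$ and integers $a_2\mid a_3\mid\cdots\mid a_q$ such that $a_2w_2,\dots,a_qw_q$ is a basis of $H\cap\ker\phi_1$; then $\{w_1,\dots,w_d\}$ is a basis of $\Z^d$ and $\{a_1w_1,\dots,a_qw_q\}$ a basis of $H$. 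The divisibility $a_1\mid a_2$ is immediate from the strong property above: extending the functional dual to $w_2$ to all of $\Z^d$ and evaluating it on $a_2w_2\in H$ exhibits $a_2$ as a value $\phi(h)$, which $a_1$ divides.

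Finally, for uniqueness of the invariant factors I would pass to determinantal divisors: for $1\le k\le q$ let $D_k$ be the gcd of all $k\times k$ minors of a matrix expressing a basis of $H$ in a basis of $\Z^d$. These $D_k$ are unchanged under right multiplication by $\mathrm{GL}_q(\Z)$ (change of basis of $H$) and left multiplication by $\mathrm{GL}_d(\Z)$ (change of basis of $\Z^d$), hence depend only on $H$; evaluating them on the diagonal form just obtained gives $D_k=a_1a_2\cdots a_k$, so that $a_k=D_k/D_{k-1}$ is determined by $H$ alone. This yields both the divisibility conditions $a_{i+1}\equiv 0\pmod{a_i}$ and the asserted uniqueness of $a_1,\dots,a_q$.
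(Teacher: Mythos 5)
Your proof is correct, but note that the paper itself offers no proof to compare against: immediately after the statement the authors write that the proof ``can be found in \cite{Bou81}'', i.e.\ Bourbaki's Alg\`ebre, where this is the invariant-factor (adapted basis) theorem for finitely generated modules over a principal ideal domain. What you give is the classical self-contained argument specialized to the integers: existence via the minimal positive value $a_1=\phi_1(v_1)$ of integer linear functionals on $H$, the splittings $\Z^d=\Z w_1\oplus\ker\phi_1$ and $H=\Z a_1w_1\oplus(H\cap\ker\phi_1)$, and induction, with $a_1\mid a_2$ forced by the fact that $a_1$ divides \emph{every} value $\phi(h)$; uniqueness via the determinantal divisors $D_k$, which are invariant under left multiplication by $\mathrm{GL}_d(\Z)$ and right multiplication by $\mathrm{GL}_q(\Z)$ by Cauchy--Binet, and equal $a_1\cdots a_k$ on the diagonal form, so $a_k=D_k/D_{k-1}$ depends only on $H$. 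Your minimal-value device is in fact the $\Z$-specialization of Bourbaki's own (choosing a linear form whose image ideal is maximal), so the two routes are cousins: your version buys an elementary, citation-free proof at the level of the rest of the paper, while the citation buys the authors generality and brevity.

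One step you should write out to make this fully rigorous: in the descent showing that the minimal positive value $a_1=\phi_1(v_1)$ divides every value $\psi(k)$, a pair $(\psi,k)$ sharing neither the functional nor the element with $(\phi_1,v_1)$ is not handled by a single B\'ezout identity, and your parenthetical only covers the shared cases. The standard completion: since $a_1\mid\phi_1(k)$ (shared functional), set $k'=k-(\phi_1(k)/a_1)v_1$, so that $\phi_1(k')=0$ and, because $a_1\mid\psi(v_1)$ (shared element), $a_1\mid\psi(k)$ if and only if $a_1\mid\psi(k')$; then apply the shared-element step to the element $v_1+k'\in H$, for which $\phi_1(v_1+k')=a_1$ while $\psi(v_1+k')\equiv\psi(k')\pmod{a_1}$, and minimality of $a_1$ forces $a_1\mid\psi(k')$. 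You correctly flagged this mixed case as ``the main obstacle,'' and the fix is the routine one, so I would call this a presentational omission rather than a genuine gap.
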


The proof of the previous result can be found in \cite{Bou81}. 

\begin{remark}
Under the assumptions of the above theorem we obtain
$$\Z^d/H\approx \Z_{a_1}\times\ldots\times\Z_{a_q}\times \Z^{d-q}.$$
\end{remark}

We are now able to characterize the closed subgroups of $\R^d$ containing $\Z^d$. The proof of the following theorem can be found in \cite{Bou74}, but we include it here for the sake of completeness.

\begin{theorem}\label{como-es-M}
Let $M\subset \R^d$. The following conditions are equivalent:
\begin{enumerate}
\item [i)] $M$ is a closed subgroup of $\R^d$ containing $\Z^d$ and $\mathbf{d}(M)=d-q$.
\item [ii)]There exist a basis $\{v_1,\ldots,v_d\}$ for $\Z^d$
and integers $a_1,\ldots,a_q$ satisfying
$a_{i+1}\equiv 0\,(\text{mod. }a_i)$ for all $1\leq i\leq q-1$, such that
$$M=\Big\{\sum_{i=1}^q k_i\frac{1}{a_i}v_i+\sum_{j=q+1}^d t_j v_j\,:\,k_i\in\Z, t_j\in\R\Big\}.$$
\end{enumerate}
Furthermore, the integers $q$ and $a_1,\ldots,a_q$ are uniquely determined by $M$.
\end{theorem}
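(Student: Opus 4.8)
The plan is to prove the equivalence by passing to dual groups and invoking the structure theorem for subgroups of $\Z^d$ (Theorem~\ref{teogrz}), together with the duality properties collected in Proposition~\ref{propM*}. The key observation is that the condition $\Z^d \subseteq M$ with $M$ closed is, by duality, equivalent to $M^* \subseteq (\Z^d)^* = \Z^d$ with $M^*$ closed, so that $M^*$ is a subgroup of $\Z^d$ to which Theorem~\ref{teogrz} applies directly. I would then reconstruct $M$ as $M = (M^*)^*$ using part (iv) of Proposition~\ref{propM*}.

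To prove (i)$\,\Rightarrow\,$(ii), I would start from a closed subgroup $M \supseteq \Z^d$ with $\mathbf{d}(M)=d-q$. By Proposition~\ref{propM*}(ii), $M^* \subseteq \Z^d$, and by part (iii), $\textbf{r}(M^*) = d - \mathbf{d}(M) = q$. Applying Theorem~\ref{teogrz} to the rank-$q$ subgroup $H := M^*$ of $\Z^d$ yields a basis $\{w_1,\ldots,w_d\}$ for $\Z^d$ and invariant factors $a_1,\ldots,a_q$ (with $a_{i+1}\equiv 0 \pmod{a_i}$) such that $\{a_1 w_1,\ldots,a_q w_q\}$ is a basis for $M^*$. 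The next step is to take $\{v_1,\ldots,v_d\}$ to be the dual basis of $\{w_1,\ldots,w_d\}$, so $\langle v_i, w_j\rangle = \delta_{i,j}$; since the dual of a $\Z^d$-basis is again a $\Z^d$-basis (the matrix relation $\widetilde A = (A^*)^{-1}$ preserves determinant $\pm 1$), the $v_i$ form a basis for $\Z^d$. Finally I would compute $(M^*)^* = M$ directly: a vector $x = \sum_i t_i v_i$ satisfies $\langle x, a_\ell w_\ell\rangle \in \Z$ for all $\ell \le q$ iff $a_\ell t_\ell \in \Z$, i.e. $t_\ell \in \tfrac{1}{a_\ell}\Z$, while pairing against the remaining basis elements $w_{q+1},\ldots,w_d$ of $\Z^d \supseteq M^*$... — here one must be careful, because $M^*$ contains not only $\{a_\ell w_\ell\}$ but all of its $\Z$-span, and the coordinates $t_{q+1},\ldots,t_d$ are left free precisely because those directions lie in the subspace part. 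Writing out the pairing conditions coordinate-by-coordinate in the $w$-basis gives exactly the description of $M$ in (ii).

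For (ii)$\,\Rightarrow\,$(i), I would simply verify that the explicitly given set is a closed subgroup containing $\Z^d$ (it contains each $v_i$, hence all of $\Z^d$) and compute its maximal subspace dimension: the free real parameters $t_{q+1},\ldots,t_d$ span a $(d-q)$-dimensional subspace, and no larger subspace fits since the first $q$ coordinates are constrained to a lattice, giving $\mathbf{d}(M)=d-q$. The uniqueness of $q$ and the $a_i$ then follows from the uniqueness clause in Theorem~\ref{teogrz} applied to $M^*$, since $M$ determines $M^*=(M^*)^{**}$ by Proposition~\ref{propM*}(iv) and the invariant factors of a subgroup of $\Z^d$ are unique.

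The main obstacle I anticipate is the careful bookkeeping in the duality computation $(M^*)^* = M$: one must correctly track how the invariant factors $a_i$ appear as denominators $1/a_i$ in the dual, and confirm that the non-lattice directions of $M$ correspond exactly to the free directions unconstrained by the pairing. This is where the dual-basis relation $\langle v_i, w_j\rangle = \delta_{i,j}$ does the real work, and it is essential that the $a_\ell$ act as scaling factors that invert to $1/a_\ell$ under the pairing. Everything else is a routine assembly of the cited structural results.
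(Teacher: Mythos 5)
Your proposal is correct and follows essentially the same route as the paper: dualize to get $M^*\subseteq\Z^d$ with $\textbf{r}(M^*)=q$, apply Theorem~\ref{teogrz} to $M^*$, pass to the dual basis, and recover $M=(M^*)^*$ by the coordinate pairing computation, with uniqueness inherited from the invariant factors of $M^*$. The only difference is cosmetic: you spell out details the paper leaves implicit (that the dual basis is again a $\Z^d$-basis via the determinant argument, and the verification of (ii)$\Rightarrow$(i), which the paper dismisses as straightforward).
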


\begin{proof}

Suppose i) is true. Since $\Z^d\subset M$ and $\mathbf{d}(M)=d-q$, we have that $M^*\subset\Z^d$ and $\textnormal{\textbf{r}}(M^*)=q$.
By Theorem \ref{teogrz}, there exist  invariant factors $a_1,\ldots,a_q$ and $\{w_1,\ldots,w_d\}$ a basis for $\Z^d$ such that $\{a_1w_1,\ldots,a_q w_q\}$ is a basis for $M^*$.

Let $\{v_1,\ldots,v_d\}$ be the dual basis for
$\{w_1,\ldots,w_d\}$.

Since $M$ is closed, it follows from item iv) of Proposition \ref{propM*} that $M=(M^*)^*$.
So, $m\in M$ if and only if
\begin{equation}\label{ec1}
\langle m, a_jw_j\rangle\in\Z\quad\forall\,\,1\leq j\leq q.
\end{equation}
As $\{v_1,\ldots,v_d\}$ is a basis, given $u\in\R^d$, there exist $u_i\in\R$ such that $u=\sum_{i=1}^d u_i v_i$.
Thus, by (\ref{ec1}), $u\in M$ if and only if $u_i a_i\in\Z$ for all $1\leq i\leq q$.

We finally obtain that $u\in M$ if and only if there exist $k_i\in\Z$ and $u_j\in\R$ such that
$$u=\sum_{i=1}^q k_i\frac{1}{a_i}v_i+\sum_{j=q+1}^d u_j v_j.$$
The proof of the other implication is straightforward.

The integers $q$ and $a_1,\ldots,a_q$ are uniquely determined by $M$ since $q=d-\mathbf{d}(M)$ and
$a_1,\ldots,a_q$ are the invariant factors of $M^*$.

\end{proof}

As a consequence of the proof given above  we obtain the following coro\-llary.

\begin{corollary}\label{como-es-M*}

Let $\Z^d\subset M\subset \R^d$ be a closed subgroup with $\mathbf{d}(M)=d-q$. If $\{v_1,\ldots,v_d\}$ and  $a_1,\ldots,a_q$ are as in Theorem \ref{como-es-M}, then
$$M^*=\Big\{\sum_{i=1}^q n_i a_iw_i\,:\,n_i\in\Z\Big\},$$
where $\{w_1,\ldots,w_d\}$ is the dual basis of $\{v_1,\ldots,v_d\}$.

\end{corollary}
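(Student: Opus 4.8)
The plan is to read the conclusion off directly from the proof of Theorem \ref{como-es-M}, since the essential work has already been done there. Recall that in that proof, starting from hypothesis i), one passes to the dual group $M^*$, observes that $M^*\subset\Z^d$ with $\textbf{r}(M^*)=q$, and then applies Theorem \ref{teogrz} to the subgroup $M^*\subset\Z^d$. This yields a basis $\{w_1,\ldots,w_d\}$ of $\Z^d$ together with invariant factors $a_1,\ldots,a_q$ such that $\{a_1w_1,\ldots,a_qw_q\}$ is a basis for $M^*$ in the sense defined for subgroups of $\Z^d$. The basis $\{v_1,\ldots,v_d\}$ appearing in Theorem \ref{como-es-M} was then introduced as the dual basis of $\{w_1,\ldots,w_d\}$.

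First I would recall the definition of a basis for a subgroup $H$ of $\Z^d$: it means that every element of $H$ is a unique integer combination of the basis vectors. Applying this to $M^*$ with basis $\{a_1w_1,\ldots,a_qw_q\}$ gives immediately that each $x\in M^*$ has a unique representation $x=\sum_{i=1}^q n_i a_i w_i$ with $n_i\in\Z$, that is,
$$M^*=\Big\{\sum_{i=1}^q n_i a_i w_i : n_i\in\Z\Big\}.$$
This is precisely the asserted formula, so no further computation is required.

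The only point that needs a word of justification — and the one place I would be careful — is the compatibility of notation between the corollary and the theorem. In the proof of Theorem \ref{como-es-M}, the set $\{v_1,\ldots,v_d\}$ is constructed as the dual basis of $\{w_1,\ldots,w_d\}$, whereas the corollary phrases things the other way around, taking $\{w_1,\ldots,w_d\}$ to be the dual basis of $\{v_1,\ldots,v_d\}$. I would note that the dual-basis relation $\langle v_i,w_j\rangle=\delta_{i,j}$ is symmetric in the two bases and that the dual basis of a given basis is unique; hence the vectors $w_i$ recovered as the dual basis of the $v_i$ in the corollary coincide with the $w_i$ produced in the proof of the theorem. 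With this identification the displayed formula above is exactly the claim.

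Since everything reduces to unwinding a definition already established inside the proof of Theorem \ref{como-es-M}, there is no substantial obstacle here; the proof is a short remark rather than an independent argument, and the statement that the $w_i$ form the dual basis of the $v_i$ is the only bookkeeping worth making explicit.
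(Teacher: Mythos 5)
Your proposal is correct and is essentially identical to the paper's own proof: the paper gives no separate argument, merely noting that the corollary follows ``as a consequence of the proof given above,'' i.e.\ of the proof of Theorem~\ref{como-es-M}, in which Theorem~\ref{teogrz} applied to $M^*\subset\Z^d$ produced the basis $\{a_1w_1,\ldots,a_qw_q\}$ of $M^*$ and the $v_i$ were then defined as the dual basis of the $w_i$. Your one explicit addition --- that the dual-basis relation $\langle v_i,w_j\rangle=\delta_{i,j}$ is symmetric and the dual basis is unique, so the $w_i$ in the corollary's formulation coincide with those of the construction --- is exactly the bookkeeping the paper leaves implicit.
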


\begin{example}\label{ej1}

Assume that $d=3$. If $M=\frac{1}{2}\Z\times\frac{1}{3}\Z\times\R$, then  $v_1=(1,1,0)$, $v_2=(3,2,0)$ and $v_3=(0,0,1)$
verify the conditions of Theorem \ref{como-es-M} with the invariant factors $a_1=1$ and $a_2=6$.
On the other hand
$v_1'=(1,1,0)$, $v_2'=(3,2,1)$ and $v_3'=(0,0,1)$ verify the same conditions.
This shows that the basis in Theorem \ref{como-es-M} is not unique.

\end{example}

\begin{remark}\label{TL}
If $\{v_1,\ldots,v_d\}$ and  $a_1,\ldots,a_q$ are as in Theorem \ref{como-es-M}, let us define the linear transformation $T$ as
$$T:\R^d\rightarrow\R^d,\quad T(e_i)=v_i\quad\forall\, 1\leq i\leq d.$$

Then $T$ is an invertible transformation that satisfies
$$M=T\big(\frac{1}{a_1}\Z\times\dots\times\frac{1}{a_q}\Z\times\R^{d-q}\big).$$
If $\{w_1,\ldots,w_d\}$ is the dual basis for $\{v_1,\ldots,v_d\}$, the inverse of the adjoint of $T$ is defined by
$$(T^*)^{-1}:\R^d\rightarrow\R^d, \quad (T^*)^{-1}(e_i)=w_i\quad\forall\, 1\leq i\leq d.$$
By Corollary \ref{como-es-M*}, it is true that
$$M^*=(T^*)^{-1}(a_1\Z\times\dots\times a_q\Z\times\{0\}^{d-q}).$$
\end{remark}

\section{The structure of principal M-invariant spaces}\label{sec-3}

Throughout this section $M$ will be a closed subgroup of $\R^d$ containing $\Z^d$ and $M^*$ its dual group
defined as in the previous section.

\begin{definition}
We will say that a closed subspace $S$ of $L^2(\R^d)$ is {\it $M$-invariant}
if $t_mf\in S$ for all $m\in M$ and $f\in S$.

Given $\Phi\subset L^2(\R^d)$, the $M$-invariant space generated by $\Phi$ is
$$S_M(\Phi)=\sn(\{t_m\varphi\,:\,m\in M\,\,,\,\,\varphi\in\Phi\}).$$
If $\Phi=\{\varphi\}$ we write $S_M(\varphi)$ and we say that $S_M(\varphi)$ is a principal $M$-invariant space.
For simplicity of notation, when $M=\Z^d$, we write $S(\varphi)$ instead of $S_{\Z^d}(\varphi)$.
\end{definition}

Principal SISs have been completely characterized by \cite{BDR94} (see also \cite{BDVR},\cite{RS}) as follows.

\begin{theorem}
Let $f\in L^2(\R^d)$ be given. If $g\in S(f)$, then there exists a $\Z^d$-periodic function
$\eta$ such that $\widehat{g}=\eta\widehat{f}$.

Conversely, if $\eta$ is a $\Z^d$-periodic function such that $\eta\widehat{f}\in L^2(\R^d)$, then the
function $g$ defined by $\widehat{g}=\eta\widehat{f}$ belongs to $S(f)$.
\label{rango-SIS-M}
\end{theorem}

The aim of this section is to generalize the previous theorem to the $M$-invariant case.
In case that $M$ is discrete, Theorem \ref{rango-SIS-M} follows easily by rescaling.
The difficulty arises when $M$
is not discrete.

\begin{theorem}\label{cha-PSIS}
Let $f\in L^2(\R^d)$ and $M$ a closed subgroup of $\R^d$ containing $\Z^d$.
If $g\in S_M(f)$, then there exists an $M^*$-periodic function
$\eta$ such that $\widehat{g}=\eta\widehat{f}$.

Conversely, if $\eta$ is an $M^*$-periodic function such that $\eta\widehat{f}\in L^2(\R^d)$, then the
function $g$ defined by $\widehat{g}=\eta\widehat{f}$ belongs to $S_M(f)$.
\label{rango-SIS-M-2}
\end{theorem}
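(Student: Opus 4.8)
The plan is to reduce the general $M$-invariant case to the already-known $\Z^d$-invariant case (Theorem \ref{rango-SIS-M}) by a change of variables, using the explicit structure of $M$ provided by Theorem \ref{como-es-M} and Remark \ref{TL}. By Remark \ref{TL} there is an invertible linear map $T$ with $M = T(\frac{1}{a_1}\Z \times \cdots \times \frac{1}{a_q}\Z \times \R^{d-q})$ and $M^* = (T^*)^{-1}(a_1\Z \times \cdots \times a_q\Z \times \{0\}^{d-q})$. The first step is to transport the problem to a space where $M$ has this product form: I would introduce the unitary operator $D_T$ on $L^2(\R^d)$ given by a suitably normalized dilation $(D_T f)(x) = |\det T|^{1/2} f(Tx)$, conjugate the translations $t_m$ through $D_T$ (so that translation by $m = Tc$ becomes translation by $c$), and check that $\widehat{D_T f} = |\det T|^{-1/2}\, \fhat \circ (T^*)^{-1}$, i.e. dilation on the Fourier side by the transpose inverse. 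This makes precise how the $M$-periodicity of a multiplier corresponds, after conjugation, to $(a_1\Z \times \cdots \times a_q\Z \times \{0\}^{d-q})$-periodicity.

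With the product structure in hand, the second step handles the two kinds of factors separately. Along the last $d-q$ coordinates $M$ contains a full real line, so $S_M(f)$ is translation invariant in those directions; along the first $q$ coordinates $M$ is the discrete group $\frac{1}{a_i}\Z$, which by rescaling (as the authors note, "Theorem \ref{rango-SIS-M} follows easily by rescaling" in the discrete case) reduces to ordinary integer shift-invariance. The forward direction — if $g \in S_M(f)$ then $\widehat{g} = \eta\widehat{f}$ with $\eta$ being $M^*$-periodic — then follows: any $g$ is an $L^2$-limit of finite linear combinations of translates $t_m f$, on the Fourier side these are $e_m\widehat{f}$, and since $m \in M$ the functions $e_m$ are exactly the characters that are $M^*$-periodic; the limit of such multipliers (taken in the appropriate fiber sense) remains a measurable $M^*$-periodic function $\eta$ with $\eta\widehat{f} \in L^2$.

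The converse is the direction I expect to be routine once the framework is set up: given an $M^*$-periodic $\eta$ with $\eta\widehat{f} \in L^2(\R^d)$, I must exhibit $g$ with $\widehat{g} = \eta\widehat{f}$ as a limit of elements of $S_M(f)$. Here the $M^*$-periodic functions should be approximated (in a weighted $L^2$ sense against $|\widehat{f}|^2$) by trigonometric-type sums $\sum c_m e_m$ with $m \in M$, so that the corresponding $g$'s lie in $S_M(f)$ and converge to the desired limit. The main obstacle, and the place where genuine care is needed rather than a mechanical reduction, is the continuous directions: when $M$ contains subspaces, $M^*$-periodicity forces $\widehat{g}$ to be supported correctly and the approximating "exponentials" $e_m$ range over a non-discrete set, so the density argument cannot be a bare Fourier-series computation but must be organized through a fiberization / range-function argument over the compact dual $\R^d/M^*$ (equivalently, via the Wiener-type description already invoked for the $M = \R^d$ case). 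Establishing that the $M^*$-periodic multipliers are precisely the weak limits of $\Span\{e_m : m \in M\}$ in the relevant weighted space is the crux; the rest is bookkeeping through the unitary $D_T$.
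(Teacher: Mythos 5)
Your overall skeleton --- reduce via the linear map $T$ of Remark \ref{TL}, fiberize over a section of $\R^d/M^*$, and identify $\widehat{S_M(f)}$ with the $M^*$-periodic multipliers of $\widehat{f}$ --- is the same family of argument the paper uses, but your proposal stops exactly where the real work lies, so it has a genuine gap. You yourself write that establishing that the $M^*$-periodic multipliers are precisely the limits of $\Span\{e_m : m\in M\}$ in the relevant weighted space ``is the crux; the rest is bookkeeping,'' and then you do not prove that crux. The paper's entire technical content is precisely this step, isolated as Lemma \ref{lema-ort}: if $h\in L^1(\SR)$, with $\SR$ the section (\ref{R-def}), and $\int_{\SR}h\,e_m=0$ for every $m\in M$, then $h=0$ a.e. Its proof is not a formal fiberization appeal: after the change of variables one takes $M=\Z^q\times\R^{d-q}$, $\SR=[0,1)^q\times\R^{d-q}$, and runs an iterated uniqueness argument --- for each $k\in\Z^q$ the function $\alpha_k(y)=\int_{[0,1)^q}h(x,y)e^{-2\pi i kx}\,dx$ is integrable by Fubini and has identically vanishing Fourier transform in the continuous variable, hence vanishes a.e.; then for a.e.\ $y$ all Fourier coefficients of $h(\cdot,y)$ vanish, so $h=0$ a.e. This simultaneous use of Fourier-series uniqueness in the lattice directions and Fourier-transform uniqueness in the continuous directions is what replaces your suggestion to ``handle the two kinds of factors separately''; as stated, that suggestion (translation invariance along $\R^{d-q}$ plus rescaling along the discrete directions) does not produce a proof, because $f$ does not split as a tensor product and the mixed group cannot be treated one factor at a time. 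Note also a small but telling slip: $\R^d/M^*$ is \emph{not} compact when $M$ is non-discrete (it is a copy of $\mathbb{T}^q\times\R^{d-q}$), which is exactly the case at issue.

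Your forward direction has a related soft spot: ``the limit of such multipliers (taken in the appropriate fiber sense) remains a measurable $M^*$-periodic function'' is an assertion, not an argument --- $L^2$-convergence of $\eta_n\widehat{f}$ gives no convergence of the $\eta_n$ themselves. To repair it one needs the isometry $\|\eta\widehat{f}\|_{L^2(\R^d)}^2=\int_{\SR}|\eta|^2[\widehat{f},\widehat{f}]$, i.e.\ identity (\ref{cuenta}), so that the set of $M^*$-periodic multiples of $\widehat{f}$ is a closed subspace containing every $e_m\widehat{f}$. The paper sidesteps both delicacies at once by constructing the orthogonal projection onto $S_M(f)$ explicitly (Lemma \ref{lema-proy-ort}): $\widehat{Pg}=\eta_g\widehat{f}$ with $\eta_g=[\widehat{g},\widehat{f}]/[\widehat{f},\widehat{f}]$ on $E_f+M^*$ and $0$ elsewhere, where boundedness of this map plus Lemma \ref{lema-ort} identifies it with the projection; both directions of the theorem then follow in two lines from $Pg=g$. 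So: right architecture, but the load-bearing lemma (density of the exponentials $e_m$, $m\in M$, equivalently the $L^1(\SR)$ uniqueness statement) is missing, and it does not follow from the reductions you set up.
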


Theorem \ref{cha-PSIS} was proved in \cite{BDR94} for the lattice case. 
We adapt their arguments to this more general case.

We will first need some definitions and properties.

By Remark \ref{TL}, there exists a linear transformation $T:\R^d\rightarrow\R^d$ such that
$M=T\big(\frac{1}{a_1}\Z\times\dots\times\frac{1}{a_q}\Z\times\R^{d-q}\big)$ and $M^*=(T^*)^{-1}(a_1\Z\times\dots\times a_q\Z\times\{0\}^{d-q})$, where $q=d-\textbf{d}(M)$.

We will denote by $\SR$ the section of the quotient  $\R^d/M^*$ defined as
\begin{equation}\label{R-def}
\SR=(T^*)^{-1}([0,a_1)\times\dots\times[0,a_q)\times\R^{d-q}).
\end{equation}
Therefore, $\{\SR+m^*\}_{m^*\in M^*}$ forms a partition of $\R^d$.

Given $f,g \in L^2(\R^d)$ we define
$$[f,g](\w):=\sum_{m^*\in M^*}f(\w+m^*)\overline{g(\w+m^*)},$$
where $\w \in \SR$. Note that, as $f,g\in L^2(\R^d)$ we have that $[f,g]\in L^1(\SR)$, since
\begin{align}\label{cuenta}
\int_{\R^d}f(\w)\overline{g(\w)}\,d\w&=\sum_{m^*\in M^*}\int_{\SR+m^*}f(\w)\overline{g(\w)}\,d\w\notag\\
&=\sum_{m^*\in M^*}\int_{\SR}f(\w+m^*)\overline{g(\w+m^*)}\,d\w\notag\\
&=\int_{\SR}[f,g](\w)\,d\w.
\end{align}
From this, it follows  that if $f\in L^2(\R^d)$, then $\{f(\w+m^*)\}_{m^*\in M^*}\in \ell^2(M^*)$ a.e.
$\w\in\SR$.

The Cauchy-Schwarz inequality in $\ell^2(M^*)$, gives the following a.e. pointwise estimate
\begin{equation}
|[f,g]|^2\leq [f,f][g,g]
\label{C-S}
\end{equation}
for every $f,g\in L^2(\R^d)$.

Given an  $M^*$-periodic function $\eta$ and $f,g\in L^2(\R^d)$ such that $\eta f\in L^2(\R^d)$,
it is easy to check that
\begin{equation}
[\eta f,g]=\eta[f,g].
\label{eta-sale}
\end{equation}

The following lemma is an  extension to  general subgroups of $\R^d$ of a result which holds for the discrete case.

\begin{lemma}
Let $f\in L^2(\R^d)$, $M$ a closed subgroup of $\R^d$ containing $\Z^d$ and $\SR$ defined as in (\ref{R-def}). Then,
$$S_{M}(f)^{\perp}=\{g\in L^2(\R^d)\,: \,[\widehat{f},\widehat{g}](\w)=0\,\,\textrm{a.e.}\,\,\w\in\SR\}.$$
\label{lema-ort}
\end{lemma}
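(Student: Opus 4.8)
The goal is to characterize the orthogonal complement $S_M(f)^\perp$ as the set of $g$ whose bracket $[\widehat f,\widehat g]$ vanishes a.e.\ on $\SR$. The natural strategy is to pass to the Fourier side (where translation becomes modulation by $e_m$) and exploit the bracket identity \eqref{eta-sale} together with the periodization computation \eqref{cuenta}. First I would observe that $g\in S_M(f)^\perp$ if and only if $g$ is orthogonal to every generator $t_m f$ with $m\in M$, since these span a dense subset of $S_M(f)$. By Plancherel and the relation $\widehat{(t_m f)}=e_m\widehat f$, this is equivalent to
$$
\int_{\R^d} e_m(\w)\,\widehat f(\w)\,\overline{\widehat g(\w)}\,d\w \EQ 0 \qquad \forall\, m\in M.
$$

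The key step is to rewrite this integral using the periodization formula \eqref{cuenta}, folding $\R^d$ onto the section $\SR$. Since $m\in M$ and $m^*\in M^*$ satisfy $\langle m,m^*\rangle\in\Z$, the exponential $e_m$ is $M^*$-periodic: $e_m(\w+m^*)=e_m(\w)$ for all $m^*\in M^*$. Hence $e_m$ factors out of the inner sum defining the bracket, and applying \eqref{cuenta} to the pair $e_m\widehat f$ and $\widehat g$ (together with \eqref{eta-sale}) gives
$$
\int_{\R^d} e_m(\w)\,\widehat f(\w)\,\overline{\widehat g(\w)}\,d\w \EQ \int_{\SR} e_m(\w)\,[\widehat f,\widehat g](\w)\,d\w.
$$
Thus $g\in S_M(f)^\perp$ if and only if $\int_{\SR} e_m\,[\widehat f,\widehat g]\,d\w=0$ for every $m\in M$.

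The remaining work is to conclude that this family of vanishing integrals forces $[\widehat f,\widehat g]=0$ a.e.\ on $\SR$. Here I would use the transformation $T$ from Remark~\ref{TL} to change variables, mapping $\SR$ to the standard fundamental domain $[0,a_1)\times\dots\times[0,a_q)\times\R^{d-q}$ and converting the functions $\{e_m : m\in M\}$ into the standard exponential system $\{e^{2\pi i\langle k,\cdot\rangle}\}$ indexed by the dual lattice of $M^*$ in the new coordinates. This exponential system is complete in $L^2$ of the transformed fundamental domain (it is an orthogonal basis after suitable normalization, being the characters of the compact quotient group $\R^d/M^*$). Therefore the integrals against $e_m$ being zero for all $m\in M$ is exactly the statement that the $L^1(\SR)$ function $[\widehat f,\widehat g]$ has all its ``Fourier coefficients'' equal to zero, which yields $[\widehat f,\widehat g]=0$ a.e.\ on $\SR$; the converse direction is immediate by reading the equivalence backwards.

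The main obstacle is the completeness argument in the non-discrete case: when $q<d$ the quotient $\R^d/M^*$ is not compact, so $\SR$ has infinite measure and the exponentials $\{e_m\}$ do not form an orthonormal basis in the naive sense. I would handle this by separating the $\R^{d-q}$ directions (where completeness of the usual Fourier transform on $\R^{d-q}$ applies) from the $q$ periodic directions (where completeness of the discrete exponentials on the torus applies), using that $[\widehat f,\widehat g]\in L^1(\SR)$ so Fubini and the uniqueness theorem for Fourier coefficients/transforms are available. Care is needed to ensure that $m$ ranges over enough of $M$ to generate both the discrete characters and all frequencies in the continuous directions, which it does precisely because $M\approx \tfrac{1}{a_1}\Z\times\dots\times\tfrac{1}{a_q}\Z\times\R^{d-q}$ under $T^{-1}$.
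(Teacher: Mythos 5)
Your proposal is correct and follows essentially the same route as the paper: density of $\{t_m f\}_{m\in M}$ plus Plancherel, folding onto $\SR$ via (\ref{cuenta}) and (\ref{eta-sale}) using the $M^*$-periodicity of $e_m$, reduction by the change of variables $T$ of Remark \ref{TL}, and then the splitting of the uniqueness argument into the $d-q$ continuous directions (Fourier transform uniqueness on $\R^{d-q}$) and the $q$ discrete directions (Fourier coefficient uniqueness on the torus), with Fubini justified by $[\widehat f,\widehat g]\in L^1(\SR)$ --- exactly the paper's $\alpha_k$/$\beta_y$ argument. Your parenthetical remark that the $e_m$ are characters of a \emph{compact} quotient is inaccurate when $q<d$, but you flag and repair this yourself in the final paragraph, so it is not a gap.
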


\begin{proof}
Since the span of the set $\{t_mf\,:\,m\in M\}$ is dense in $S_{M}(f)$, we have that
$g\in S_{M}(f)^{\perp}$ if and only if $\langle \widehat{g}, e_m\widehat{f}\rangle=0$ for all $m\in M$.
As $e_m$ is $M^*$-periodic, using  (\ref{cuenta}) and (\ref{eta-sale}), we obtain that
$g\in S_{M}(f)^{\perp}$ if and only if
\begin{equation}
\int_{\SR}e_m(\w)[\widehat{f},\widehat{g}](\w)\,d\w=0,
\label{corchete}
\end{equation}
for all $m\in M$.

At this point, what is left to show is that if (\ref{corchete}) holds then $[\widehat{f},\widehat{g}](\w)=0$ a.e. $\w\in\SR$.
For this, taking into account that $[\widehat{f},\widehat{g}]\in L^1(\SR)$, it is enough to prove that if $h\in L^1(\SR)$ and $\int_{\SR}he_m=0$
for all $m\in M$ then $h=0$ a.e $\w\in \SR$.

We will prove the preceding property for the case $M=\Z^q\times\R^{d-q}$. The general case will follow from
a change of variables using the description of $M$ and $\SR$ given in Remark \ref{TL} and (\ref{R-def}).

Suppose now  $M=\Z^q\times\R^{d-q}$, then $\SR=[0,1)^q\times\R^{n-q}$. Take $h\in L^1(\SR)$, such that
\begin{equation}\label{cuenta2}
\iint_{[0,1)^q\times\R^{n-q}}h(x,y)e^{-2\pi i (kx+ty)}\,dxdy=0\quad\forall\,k\in\Z^q,\,t\in\R^{d-q}.
\end{equation}
Given $k\in\Z^q$, define $\alpha_k(y):=\int_{[0,1)^q}h(x,y)e^{-2\pi i kx}\,dx$ for a.e. $y\in\R^{d-q}$.
It follows from (\ref{cuenta2}) that
\begin{equation}\label{cuenta4}
\int_{\R^{d-q}}\alpha_k(y)e^{-2\pi i ty}\,dy=0\quad\forall\,t\in\R^{d-q}.
\end{equation}
Since  $h\in L^1(\SR)$, by Fubini's Theorem,
$\alpha_k\in L^1([0,1)^q)$. Thus, using (\ref{cuenta4}), $\alpha_k(y)=0$ a.e. $y\in\R^{d-q}$. That is
\begin{equation}\label{cuenta3}
\int_{[0,1)^q}h(x,y)e^{-2\pi i kx}\,dx=0
\end{equation} for a.e. $y\in\R^{d-q}$.
Define now $\beta_y(x):=h(x,y)$. By (\ref{cuenta3}), for a.e. $y\in\R^{d-q}$ we have
that $\beta_y(x)=0$ for a.e. $x\in [0,1)^q$. Therefore, $h(x,y)=0$ a.e. $(x,y)\in[0,1)^q\times\R^{d-q}$
and this completes the proof.
\end{proof}

Now we will give a formula for the orthogonal projection onto $S_M(f)$.

\begin{lemma}\label{lema-proy-ort}
Let $P$ be the orthogonal projection onto $S_M(f)$. Then, for each $g\in L^2(\R^d)$, we have $\widehat{Pg}=\eta_g\widehat{f}$, where $\eta_g$ is the
$M^*$-periodic function defined by
$$\eta_g:=
\begin{cases}
[\widehat{g},\widehat{f}]/[\widehat{f},\widehat{f}]&\,\textrm{on}\, E_f+M^*\\
0\,&\,\textrm{otherwise},
\end{cases}$$
and $E_f$ is the set $\{ \w\in\SR\,:\, [\widehat{f},\widehat{f}](\w)\neq 0\}$.
\end{lemma}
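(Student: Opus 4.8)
The plan is to exhibit the orthogonal decomposition of $g$ directly: I would let $h$ be the function determined by $\widehat{h}=\eta_g\widehat{f}$, and then verify the two statements $h\in S_M(f)$ and $g-h\in S_M(f)^{\perp}$. Once both hold, the uniqueness of the decomposition $L^2(\R^d)=S_M(f)\dotoplus S_M(f)^{\perp}$ forces $Pg=h$, which is exactly the claimed identity $\widehat{Pg}=\eta_g\widehat{f}$. In this way the whole proof reduces to two membership checks, each handled by a result already available: the converse part of Theorem \ref{cha-PSIS} for membership in $S_M(f)$, and Lemma \ref{lema-ort} for membership in the orthogonal complement.

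The first check, $h\in S_M(f)$, is where essentially all the work lies, because to apply Theorem \ref{cha-PSIS} I must first confirm that $\eta_g\widehat{f}\in L^2(\R^d)$. Using (\ref{cuenta}) and the $M^*$-periodicity of $\eta_g$ through (\ref{eta-sale}), I would write
\begin{equation*}
\int_{\R^d}|\eta_g\widehat{f}|^2\,d\w=\int_{\SR}|\eta_g|^2[\widehat{f},\widehat{f}]\,d\w,
\end{equation*}
and then observe that on $E_f$ the integrand equals $|[\widehat{g},\widehat{f}]|^2/[\widehat{f},\widehat{f}]$, while it vanishes off $E_f$ by the definition of $\eta_g$. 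The pointwise Cauchy--Schwarz estimate (\ref{C-S}) bounds this by $[\widehat{g},\widehat{g}]$, so the integral is at most $\|g\|^2<\infty$. Since $\eta_g$ is $M^*$-periodic by construction and $\eta_g\widehat{f}\in L^2(\R^d)$, Theorem \ref{cha-PSIS} yields $h\in S_M(f)$.

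For the second check I would invoke Lemma \ref{lema-ort} and show that $[\widehat{f},\widehat{g}-\widehat{h}](\w)=0$ for a.e.\ $\w\in\SR$. Splitting the bracket and using (\ref{eta-sale}) together with the periodicity of $\eta_g$ gives $[\widehat{f},\widehat{h}]=\overline{\eta_g}\,[\widehat{f},\widehat{f}]$; since $[\widehat{f},\widehat{f}]$ is real-valued and $\overline{[\widehat{g},\widehat{f}]}=[\widehat{f},\widehat{g}]$, on $E_f$ this equals $\tfrac{[\widehat{f},\widehat{g}]}{[\widehat{f},\widehat{f}]}[\widehat{f},\widehat{f}]=[\widehat{f},\widehat{g}]$, so $[\widehat{f},\widehat{g}-\widehat{h}]=0$ there. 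Off $E_f$ one has $[\widehat{f},\widehat{f}]=0$, whence $[\widehat{f},\widehat{g}]=0$ by (\ref{C-S}) and also $\widehat{h}=\eta_g\widehat{f}=0$, so the bracket vanishes again. Thus $[\widehat{f},\widehat{g}-\widehat{h}]=0$ a.e.\ on $\SR$, and Lemma \ref{lema-ort} gives $g-h\in S_M(f)^{\perp}$, completing the argument.

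The main obstacle, and the only place requiring genuine care, is the integrability step: everything downstream is formal manipulation of the bracket $[\cdot,\cdot]$, but the passage through Theorem \ref{cha-PSIS} is legitimate only once $\eta_g\widehat{f}$ is known to lie in $L^2(\R^d)$, and this is precisely what the Cauchy--Schwarz inequality (\ref{C-S}) is designed to supply. A minor subsidiary point is to keep the conjugations straight, since the bracket is conjugate-linear in its second slot; the fact that $[\widehat{f},\widehat{f}]$ is real-valued is what makes $\overline{\eta_g}\,[\widehat{f},\widehat{f}]$ collapse to $[\widehat{f},\widehat{g}]$ correctly on $E_f$.
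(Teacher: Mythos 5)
Your overall strategy---exhibit $h$ with $\widehat{h}=\eta_g\widehat{f}$, check $h\in S_M(f)$ and $g-h\in S_M(f)^{\perp}$, and conclude $Pg=h$ from the uniqueness of the orthogonal decomposition---is sound, and your two bracket computations (the $L^2$ bound via (\ref{C-S}), and the verification that $[\widehat{f},\widehat{g}-\widehat{h}]=0$ a.e. on $\SR$, including the conjugation bookkeeping off and on $E_f$) are correct. The genuine gap is the step establishing $h\in S_M(f)$: you invoke the converse part of Theorem \ref{cha-PSIS}, but in this paper that theorem is proved \emph{from} Lemma \ref{lema-proy-ort}; indeed both implications in its proof run through the projection formula ($g\in S_M(f)\Rightarrow Pg=g\Rightarrow \widehat{g}=\eta_g\widehat{f}$, and conversely $\eta_g\widehat{f}=\eta\widehat{f}\Rightarrow \widehat{Pg}=\widehat{g}\Rightarrow g\in S_M(f)$). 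So, as written, your argument is circular within the paper's development: the lemma you are asked to prove is an ingredient of the result you cite.

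The gap is repairable using only material available before the lemma. Since $S_M(f)$ is closed, $S_M(f)=(S_M(f)^{\perp})^{\perp}$, so it suffices to show $\langle h,u\rangle=0$ for every $u\in S_M(f)^{\perp}$. By (\ref{cuenta}) and (\ref{eta-sale}), $\langle h,u\rangle=\int_{\SR}[\eta_g\widehat{f},\widehat{u}]\,d\w=\int_{\SR}\eta_g[\widehat{f},\widehat{u}]\,d\w$, and Lemma \ref{lema-ort} gives $[\widehat{f},\widehat{u}]=0$ a.e. on $\SR$; hence $\langle h,u\rangle=0$ and $h\in S_M(f)$. With that substitution your proof closes, and it is then a genuinely different route from the paper's: the paper defines the linear map $Q\widehat{g}=\eta_g\widehat{f}$, shows it is bounded (by the same Cauchy--Schwarz estimate you use), shows $Q$ vanishes on $\widehat{S_M(f)}^{\perp}$ via Lemma \ref{lema-ort}, shows $Q$ agrees with the identity on the dense span of $\{\widehat{t_mf}:m\in M\}$ because $\eta_{(t_mf)}=e_m$ on $E_f+M^*$, and concludes $Q=\widehat{P}$ by linearity, boundedness and density. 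Your decomposition argument avoids the density step entirely, at the cost of the extra orthogonality computation for $g-h$.
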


\begin{proof}
Let $\widehat{P}$ be the orthogonal projection onto $\widehat{S_M(f)}$. Since $\widehat{Pg}=\widehat{P}\widehat{g}$,
it is enough to show that $\widehat{P}\widehat{g}=\eta_g\widehat{f}$.

We first want to prove that $\eta_g\widehat{f}\in L^2(\R^d)$. Combining (\ref{cuenta}), (\ref{C-S}) and
(\ref{eta-sale})
$$\int_{\R^d}|\eta_g\widehat{f}|^2=\int_{\SR}|\eta_g|^2[\widehat{f},\widehat{f}]\leq\int_{\SR}[\widehat{g},\widehat{g}]
=\|g\|_{L^2}^2,$$
and so, $\eta_g\widehat{f}\in L^2(\R^d)$. Define the linear map
$$Q:L^2(\R^d)\longrightarrow L^2(\R^d),\quad Q\widehat{g}=\eta_g\widehat{f},$$
which is well defined and has norm not greater than one. We will prove that $Q=\widehat{P}$.

Take $\widehat{g}\in\widehat{S_M(f)}^{\perp}=(S_M(f)^{\perp})^{\land}$. Then Lemma \ref{lema-ort} gives that
$\eta_g=0$, hence $Q\widehat{g}=0$. Therefore, $Q=\widehat{P}$ on $\widehat{S_M(f)}^{\perp}$.

On the other hand, on $E_f+M^*$,
$$\eta_{(t_mf)}=[e_m\widehat{f},\widehat{f}]/[\widehat{f},\widehat{f}]=e_m\quad\forall\,m\in M.$$
Since $\widehat{f}=0$ outside of $E_f+M^*$, we have that $Q(\widehat{t_mf})=e_m\widehat{f}$.
As $Q$ is linear and bounded, and the set $\text{span}\{t_mf\,:\,m\in M\}$ is dense in $S_{M}(f)$,
 $Q=\widehat{P}$ on $\widehat{S_{M}(f)}$.

\end{proof}

\begin{proof}[Proof of Theorem \ref{rango-SIS-M-2}]

Suppose that $g\in S_M(f)$, then $Pg=g$, where $P$ is the orthogonal projection onto $S_M(f)$. Hence,
by Lemma \ref{lema-proy-ort}, $\widehat{g}=\eta_g\widehat{f}$.

Conversely, if $\eta\widehat{f}\in L^2(\R^d)$ and $\eta$ is an $M^*$-periodic function,
then $g$, the inverse transform of $\eta\widehat{f}$ is also in $L^2(\R^d)$ and satisfies, by (\ref{eta-sale}),
that $\eta_g=[\eta\widehat{f},\widehat{f}]/[\widehat{f},\widehat{f}]=\eta$ on $E_f+M^*$.

On the other hand, since $\text{supp}(\widehat{f})\subset E_f+M^*$, we have that $\eta_g\widehat{f}=\eta\widehat{f}$.

So, $\widehat{Pg}=\eta_g\widehat{f}=\eta\widehat{f}=\widehat{g}$.
Consequently, $Pg=g$, and hence $g\in S_M(f)$.

\end{proof}

\section{Characterization of  M-invariance}\label{sec-4}

Given $M$ a closed subgroup of $\R^d$ containing $\Z^d$,
our goal is  to characterize when a SIS S is an $M$-invariant space.
For this, we will construct  a partition $\{B_{\s}\}_{\s\in\SN}$ of $\R^d$, where each $B_{\s}$ will be  an $M^*$-periodic set and the index set $\SN$ will be properly chosen later (see(\ref{como-es-N})).
Using this partition, for each $\s\in \SN$, we define the subspaces
\begin{equation}\label{U-sigma}
U_{\s}=\{f\in L^2(\R^d):\widehat{f}=\chi_{B_{\s}}\widehat{g},
\,\,\textrm{with} \,\,g\in S\}.
\end{equation}

The main theorem of this section characterizes the $M$-invariance of $S$ in terms of
the subspaces $U_{\s}$.

\begin{theorem}\label{teo-subs}
If $S\subseteq L^2(\R^d)$ is a SIS and $M$ is a closed subgroup of $\R^d$ containing $\Z^d$, then the following are equivalent.
\begin{enumerate}
\item [i)]$S$ is $M$-invariant.
\item [ii)]$U_{\s}\subseteq S$ for all $\s\in \SN$.
\end{enumerate}
Moreover, in case any of the above holds, we have that $S$ is the orthogonal direct sum
$$S=\dotbigoplus_{\s\in \SN} U_{\s}.$$
\end{theorem}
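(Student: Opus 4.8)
The plan is to pass to the Fourier side and read both conditions as statements about multiplication operators on $\widehat{S}$. Recall that $S$ is shift-invariant exactly when $\widehat{S}$ is invariant under multiplication by $e_k$, $k\in\Z^d$, and hence (taking finite linear combinations, and then bounded limits via a Fej\'er kernel on $\R^d/\Z^d$ together with dominated convergence) under multiplication by every bounded $\Z^d$-periodic function; likewise $S$ is $M$-invariant exactly when $\widehat{S}$ is invariant under multiplication by $e_m$ for every $m\in M$. Throughout I will use that each $B_\s$ is $M^*$-periodic, that $\{B_\s\}_{\s\in\SN}$ partitions $\R^d$, and the defining property of the partition that on each $B_\s$ every character $e_m$ ($m\in M$) coincides a.e.\ with a $\Z^d$-periodic function (equivalently, $B_\s$ meets each $\Z^d$-coset in a single $M^*$-coset). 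Note also that for $g\in L^2(\R^d)$ the function with Fourier transform $\chi_{B_\s}\widehat{g}$ is, by definition, an element of $U_\s$, so that $\widehat{U_\s}=\chi_{B_\s}\widehat{S}$.

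Orthogonality and the decomposition come first and are the easiest part. For $\s\neq\s'$, elements of $U_\s$ and $U_{\s'}$ have Fourier transforms supported in the disjoint sets $B_\s$ and $B_{\s'}$, so $U_\s\perp U_{\s'}$. Assuming (ii), i.e.\ $U_\s\subseteq S$ for all $\s$, the orthogonal projection $\chi_{B_\s}(\cdot)$ maps $\widehat{S}$ into itself, whence $\widehat{U_\s}=\widehat{S}\cap\{h:\supp h\subseteq B_\s\}$ is closed and $U_\s$ is a closed subspace of $S$. Since $\{B_\s\}$ partitions $\R^d$, for $g\in S$ Parseval gives $\widehat{g}=\sum_{\s}\chi_{B_\s}\widehat{g}$ with orthogonal summands, each $\chi_{B_\s}\widehat{g}\in\widehat{U_\s}$; hence $g\in\dotbigoplus_\s U_\s$. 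The reverse inclusion is clear because $S$ is closed and contains every $U_\s$, so $S=\dotbigoplus_{\s\in\SN}U_\s$.

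For (ii)$\Rightarrow$(i) I would fix $m\in M$ and $g\in S$ and show $e_m\widehat{g}\in\widehat{S}$. Write $e_m\widehat{g}=\sum_\s e_m\chi_{B_\s}\widehat{g}$, and for each $\s$ set $\widehat{f_\s}=\chi_{B_\s}\widehat{g}$, so that $f_\s\in U_\s\subseteq S$. By the construction of $B_\s$ there is a bounded $\Z^d$-periodic function $\rho$ with $e_m=\rho$ a.e.\ on $B_\s\supseteq\supp\widehat{f_\s}$; thus $e_m\widehat{f_\s}=\rho\,\widehat{f_\s}$, which lies in $\widehat{S}$ because $\widehat{S}$ is invariant under multiplication by bounded $\Z^d$-periodic functions. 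Summing over the disjoint supports (an orthogonal, $L^2$-convergent sum) yields $e_m\widehat{g}\in\widehat{S}$, so $t_m g\in S$ and $S$ is $M$-invariant.

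The implication (i)$\Rightarrow$(ii) is the main obstacle, because it requires manufacturing the sharp cut-off $\chi_{B_\s}$ out of the characters $e_m$. I would prove the stronger statement that an $M$-invariant $\widehat{S}$ is invariant under multiplication by every bounded $M^*$-periodic function and then apply it to $\chi_{B_\s}$. Using the transform $T$ of Remark \ref{TL} I reduce to $M=\tfrac1{a_1}\Z\times\cdots\times\tfrac1{a_q}\Z\times\R^{d-q}$, separating the \emph{torsion directions}, where the new characters run over the finite group $\Z_{a_1}\times\cdots\times\Z_{a_q}$, from the \emph{subspace directions} $\R^{d-q}$, along which $M$ contains a full subspace. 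In the torsion directions the indicator of any union of $M^*$-cosets is an exact \emph{finite} linear combination of the $e_m$ (finite Fourier inversion on $\Z_{a_1}\times\cdots\times\Z_{a_q}$), so invariance under the $e_m$ transfers to invariance under such indicators with no limiting process. In the subspace directions $\widehat{S}$ is invariant under all modulations $e_m$, $m\in\R^{d-q}$, i.e.\ translation invariant there, and Wiener's theorem (the case $M=\R^d$ recalled in the introduction, applied fibrewise) yields invariance under multiplication by an arbitrary measurable mask in those variables. Multiplying the two kinds of masks gives invariance under $\chi_{B_\s}$, that is $\chi_{B_\s}\widehat{g}\in\widehat{S}$ for all $g\in S$, which is precisely $U_\s\subseteq S$. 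The points needing care are the measurability and the transport of these multiplier statements between the product model and the original coordinates through $T$ and the bracket $[\cdot,\cdot]$ of Section \ref{sec-3}, together with the fibrewise application of Wiener's theorem in the subspace directions.
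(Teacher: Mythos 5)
Your proposal is correct in outline, and its two halves relate differently to the paper's proof. The decomposition and the implication (ii)$\Rightarrow$(i) are essentially the paper's own argument (Lemma \ref{U}): the key point in both is that each character $e_m$, $m\in M$, agrees a.e.\ on $B_\s$ with a bounded $\Z^d$-periodic function, because $B_\s$ meets each $\Z^d$-coset in exactly one $M^*$-coset; the only difference is that the paper quotes Theorem \ref{rango-SIS-M} (de Boor--DeVore--Ron) where you run a Fej\'er-kernel argument, which is immaterial. The hard implication (i)$\Rightarrow$(ii) is where you genuinely diverge: the paper disposes of it in two lines by citing Theorem \ref{cha-PSIS} ($\chi_{B_\s}$ is $M^*$-periodic, hence $\chi_{B_\s}\widehat{g}\in\widehat{S_M(g)}\subseteq\widehat{S}$), so all the work sits in Section \ref{sec-3}: the bracket $[\cdot,\cdot]$, the orthogonal-complement description (Lemma \ref{lema-ort}, proved by reduction to the product model and a mixed discrete/continuous $L^1$-uniqueness argument), and the explicit projection formula (Lemma \ref{lema-proy-ort}). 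You bypass principal $M$-invariant spaces and the bracket entirely, proving the multiplier statement for $\widehat{S}$ itself by factoring $\chi_{B_\s}$, in the coordinates of Remark \ref{TL}, into a torsion mask times a mask depending only on the $\R^{d-q}$-variables. That is a legitimate, arguably more conceptual route, and it yields directly the stronger statement (invariance under all bounded $M^*$-periodic multipliers) for an arbitrary $M$-invariant SIS, not just for principal spaces.

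Two caveats, one of which is a genuine (though repairable) error. First, the torsion step as written is false: the indicator in the torsion directions is \emph{not} an exact finite linear combination of the characters $e_m$ --- such combinations are continuous, indicators are not. What finite Fourier inversion on $\Z_{a_1}\times\cdots\times\Z_{a_q}$ gives is a finite combination of the \emph{step} characters $\xi\mapsto e^{2\pi i\sum_{j\le q} l_j\lfloor\xi_j\rfloor/a_j}$, and each of these equals $e_m$ (for a suitable $m\in M$) times a bounded $\Z^d$-periodic function, since $\lfloor\xi_j\rfloor-\xi_j$ is $\Z$-periodic. So the correct statement is ``finite combination of products $e_m\cdot(\text{bounded }\Z^d\text{-periodic})$'', which your first paragraph's tools do cover; but then the torsion step uses the Fej\'er limiting process after all, through the $\Z^d$-periodic factors. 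Second, the ``fibrewise Wiener theorem'' is not a formal consequence of the scalar Wiener theorem recalled in the introduction: it is the vector-valued, doubly-invariant-subspace version (Helson), or equivalently a commutant argument --- the projection onto $\widehat{S}$ commutes with every multiplication by $e_t$, $t\in\{0\}^q\times\R^{d-q}$, hence with the von Neumann algebra they generate, which by weak-$*$ density of trigonometric polynomials in $L^\infty(\R^{d-q})$ consists of all multiplications by bounded measurable functions of the last $d-q$ variables. This step carries the same analytic weight that the paper's Section \ref{sec-3} carries; it is citable (Helson's book and Srinivasan's paper are in the bibliography), but it is not free, and your writeup correctly flags it only as a ``point needing care'' rather than supplying the argument.
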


Before proving the theorem let us  carefully define the partition $\{B_{\s}\}_{\s\in\SN}$, in such a way that
each $B_{\s}$ is an $M^*$-periodic set.

Let $\W$ be the  section of the quotient $\R^d/\Z^d$ given by
\begin{equation}\label{como-es-W}
\W=(T^*)^{-1}([0,1)^d),
\end{equation}
 where $T$ is
as in Remark \ref{TL}.
Then $\W$ tiles $\R^d$ by $\Z^d$ translations, that is
\begin{equation}
\R^d=\bigcup_{k\in\Z^d}\W+k.
\label{0mega-tesela}
\end{equation}

Now, for each $k\in\Z^d$,  consider $(\W+k)+M^*$. Although these sets are $M^*$-periodic, they are not a partition of $\R^d$. So, we need to choose  a subset $\SN$ of $\Z^d$ such that
if $\s,\s'\in\SN$ and $\s+M^*=\s'+M^*$, then $\s=\s'$. Thus $\SN$ should be a section of the quotient $\Z^d/M^*$. We can choose for example the set $\SN$ given by
\begin{equation}\label{como-es-N}
\SN=(T^*)^{-1}(\{0,\ldots,a_1-1\}\times\ldots\times\{0,\ldots,a_q-1\}\times\Z^{d-q}),
 \end{equation}
 where $a_1,\ldots,a_q$ are the invariant factors of $M$.
Hence, given $\s\in\SN$
we define
\begin{equation}\label{B-sigma}
B_{\s}=\W+\s+M^*=\bigcup_{m^*\in M^*}(\W+\s)+m^*.
\end{equation}

We give three basic examples of this construction.

\begin{example}
\noindent

$(1)$ Let $M=\frac{1}{n}\Z\subset\R$, then $M^*=n\Z$,  $\W=[0,1)$ and $\SN=\{0,\ldots,n-1\}$.
Given $\s\in \{0,\ldots,n-1\}$, we have $$B_{\s}=\bigcup_{m^*\in n\Z} ([0,1)+\s)+m^*=
\bigcup_{j\in\Z}[\s,\s+1)+n j.$$
Figure 1 illustrates  the partition for $n=4$. In the picture, the black dots represent the set $\SN$. The set $B_2$
is the one which appears in gray.
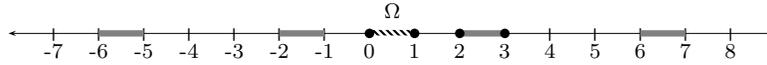
\begin{figure}[h!]
\centering
\begin{flushleft}
\psset{unit=0.6cm}
\begin{pspicture}(-10,-1.5)(2,1)
\psset{unit=0.6cm}
\psline[linewidth=0.3pt]{<->}(-8,0)(9,0)

\psdot[dotstyle=|](4,0)
\psdot[dotstyle=|](5,0)
\psdot[dotstyle=|](6,0)
\psdot[dotstyle=|](7,0)
\psdot[dotstyle=|](8,0)
\psdot[dotstyle=|](-1,0)
\psdot[dotstyle=|](-2,0)
\psdot[dotstyle=|](-3,0)
\psdot[dotstyle=|](-4,0)
\psdot[dotstyle=|](-5,0)
\psdot[dotstyle=|](-6,0)
\psdot[dotstyle=|](-7,0)

\rput(0,-0.4){\scriptsize{0}}
\rput(1,-0.4){\scriptsize{1}}
\rput(2,-0.4){\scriptsize{2}}
\rput(3,-0.4){\scriptsize{3}}
\rput(4,-0.4){\scriptsize{4}}
\rput(5,-0.4){\scriptsize{5}}
\rput(6,-0.4){\scriptsize{6}}
\rput(7,-0.4){\scriptsize{7}}
\rput(8,-0.4){\scriptsize{8}}
\rput(-1,-0.4){\scriptsize{-1}}
\rput(-2,-0.4){\scriptsize{-2}}
\rput(-3,-0.4){\scriptsize{-3}}
\rput(-4,-0.4){\scriptsize{-4}}
\rput(-5,-0.4){\scriptsize{-5}}
\rput(-6,-0.4){\scriptsize{-6}}
\rput(-7,-0.4){\scriptsize{-7}}
\rput(0.5,0.5){\scriptsize{$\Omega$}}
\psframe[linecolor=gray, linestyle=dotted, dotsep=500pt, fillstyle=vlines*,
hatchsep=1pt](0,-0.07)(1,0.1)
\psframe*[linecolor=gray](2,-0.07)(3,0.07)
\psframe*[linecolor=gray](6,-0.07)(7,0.07)
\psframe*[linecolor=gray](-2,-0.07)(-1,0.07)
\psframe*[linecolor=gray](-6,-0.07)(-5,0.07)
\psdot[dotstyle=*](0,0)
\psdot[dotstyle=*](1,0)
\psdot[dotstyle=*](2,0)
\psdot[dotstyle=*](3,0)
\end{pspicture}
\end{flushleft}
\caption{\small Partition of the real line for $M=\frac1{4}\Z$.}
\end{figure}

$(2)$ Let $M=\frac1{2}\mathbb{Z}\times\mathbb{R}$, then $\Omega=[0,1)^2$, $M^*=2\Z\times\{0\}$ and
$\mathcal{N}=\{0,1\}\times\mathbb{Z}$.

So, the sets $B_{(i,j)}$ are
$$B_{(i,j)}=\bigcup_{k\in\mathbb{Z}}\big([0,1)^2+(i,j)\big)+(2k,0)$$
where $(i,j)\in\mathcal{N}$. See Figure 2, where the sets $B_{(0,0)}$, $B_{(1,1)}$ and  $B_{(1,-1)}$ are represented by the squares painted in light gray,
gray and dark gray respectively. As in the previous figure, the set $\SN$ is represented by the black dots.
\begin{figure}[h!]
\centering
\begin{center}
\psset{unit=0.7cm}
\begin{pspicture}(-9,-3)(9,4)
\psgrid[subgriddiv=0,griddots=5,gridlabels=7pt,gridcolor=lightgray](-7,-2)(7,3)

\psline[linewidth=1pt]{<->}(0,-2)(0,3)
\rput(-0.3,-0.3){$\scriptsize{o}$}

\psframe*[linecolor=lightgray](0,0)(1,1)
\rput(0.5,0.5){$\Omega$}
\psframe*[linecolor=lightgray](2,0)(3,1)
\psframe*[linecolor=lightgray](4,0)(5,1)
\psframe*[linecolor=lightgray](-2,0)(-1,1)
\psframe*[linecolor=lightgray](-4,0)(-3,1)
\psframe*[linecolor=lightgray](6,0)(7,1)
\psframe*[linecolor=lightgray](-6,0)(-5,1)

\psframe*[linecolor=gray](1,1)(2,2)
\psframe*[linecolor=gray](3,1)(4,2)
\psframe*[linecolor=gray](5,1)(6,2)
\psframe*[linecolor=gray](-1,1)(0,2)
\psframe*[linecolor=gray](-3,1)(-2,2)
\psframe*[linecolor=gray](-5,1)(-4,2)
\psframe*[linecolor=gray](-7,1)(-6,2)

\psframe*[linecolor=darkgray](1,-1)(2,0)
\psframe*[linecolor=darkgray](3,-1)(4,0)
\psframe*[linecolor=darkgray](5,-1)(6,0)
\psframe*[linecolor=darkgray](-1,-1)(0,0)
\psframe*[linecolor=darkgray](-3,-1)(-2,0)
\psframe*[linecolor=darkgray](-5,-1)(-4,0)
\psframe*[linecolor=darkgray](-7,-1)(-6,0)
\psline[linewidth=1pt]{<->}(-7,0)(7,0)
\psdot[dotstyle=*](0,0)
\psdot[dotstyle=*](0,1)
\psdot[dotstyle=*](1,0)
\psdot[dotstyle=*](1,1)
\psdot[dotstyle=*](0,2)
\psdot[dotstyle=*](1,2)
\psdot[dotstyle=*](0,3)
\psdot[dotstyle=*](1,3)
\psdot[dotstyle=*](0,-1)
\psdot[dotstyle=*](1,-1)
\psdot[dotstyle=*](0,-2)
\psdot[dotstyle=*](1,-2)

\end{pspicture}
\end{center}
\caption{\small Partition of the plane  for $M=\frac1{2}\mathbb{Z}\times\mathbb{R}$.}
\end{figure}
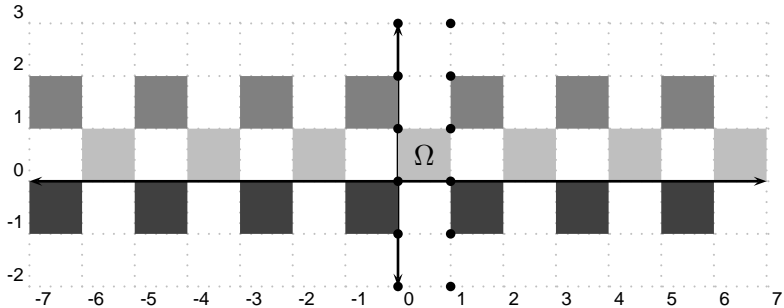

$(3)$ Let $M=\{k\frac1{3}v_1+tv_2\,:\, k\in \mathbb{Z}\,\,\textrm{and}\,\,t\in\mathbb{R}\}$, where $v_1=(1,0)$ and $v_2=(-1,1)$. Then, $\{v_1,v_2\}$ satisfy conditions in Theorem \ref{como-es-M}. By Corollary \ref{como-es-M*},
$M^*=\{k3 w_1\,:\,\,k\in\mathbb{Z}\}$, where $w_1=(1,1)$ and $w_2=(0,1)$.

Note that the sets $\W$ and $\SN$ can be expressed in terms of $w_1$ and $w_2$ as
$$\Omega=\{tw_1+sw_2:\,\, t,s\in [0,1)\}$$ and
$$\mathcal{N}=\{aw_1+kw_2:\,\, a\in\{0,1,2\},\,\,k\in\mathbb{Z}\}.$$
This is illustrated in Figure 3. In this case the sets $B_{(0,0)}$, $B_{(1,0)}$ and  $B_{(1,2)}$ correspond to the light gray, gray and dark gray regions respectively. And again, the black dots represent the set $\SN$.
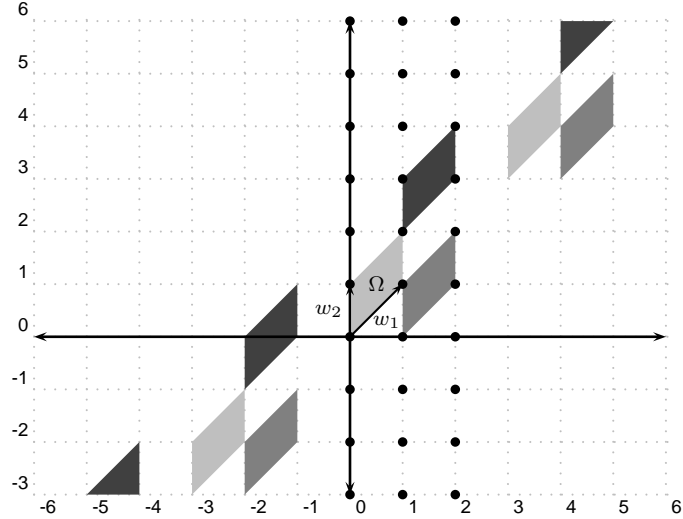
\begin{figure}[h!]
\centering
\begin{center}
\psset{unit=0.7cm}
\begin{pspicture}(-7,-4)(7,7)
\psgrid[subgriddiv=0,griddots=5,gridlabels=7pt,gridcolor=lightgray](-6,-3)(6,6)
\psline[linewidth=1pt]{<->}(0,-3)(0,6)

\pspolygon*[linecolor=lightgray](0,0)(0,1)(1,2)(1,1)
\rput(0.5,1){\scriptsize{$\Omega$}}
\pspolygon*[linecolor=lightgray](3,3)(3,4)(4,5)(4,4)
\pspolygon*[linecolor=lightgray](-2,-1)(-2,-2)(-3,-3)(-3,-2)
\pspolygon*[linecolor=gray](1,0)(1,1)(2,2)(2,1)
\pspolygon*[linecolor=gray](4,3)(4,4)(5,5)(5,4)
\pspolygon*[linecolor=gray](-1,-1)(-1,-2)(-2,-3)(-2,-2)
\pspolygon*[linecolor=darkgray](1,2)(1,3)(2,4)(2,3)
\pspolygon*[linecolor=darkgray](4,5)(4,6)(5,6)
\pspolygon*[linecolor=darkgray](-1,1)(-1,0)(-2,-1)(-2,0)
\pspolygon*[linecolor=darkgray](-4,-2)(-4,-3)(-5,-3)
\psline[linewidth=1pt]{<->}(-6,0)(6,0)
\psline[linewidth=0.8pt]{->}(0,0)(1,1)
\psline[linewidth=0.8pt]{->}(0,0)(0,1)
\rput[t] (-0.4,0.6){\scriptsize{$w_2$}}
\rput[t] (0.7,0.4){\scriptsize{$w_1$}}
\psdot[dotstyle=*](0,0)
\psdot[dotstyle=*](1,0)
\psdot[dotstyle=*](0,1)
\psdot[dotstyle=*](2,0)
\psdot[dotstyle=*](2,1)
\psdot[dotstyle=*](0,2)
\psdot[dotstyle=*](0,3)
\psdot[dotstyle=*](0,4)
\psdot[dotstyle=*](0,5)
\psdot[dotstyle=*](0,6)
\psdot[dotstyle=*](0,-1)
\psdot[dotstyle=*](0,-2)
\psdot[dotstyle=*](0,-3)
\psdot[dotstyle=*](1,1)
\psdot[dotstyle=*](1,2)
\psdot[dotstyle=*](1,3)
\psdot[dotstyle=*](1,4)
\psdot[dotstyle=*](1,5)
\psdot[dotstyle=*](1,6)
\psdot[dotstyle=*](1,-1)
\psdot[dotstyle=*](1,-2)
\psdot[dotstyle=*](1,-3)
\psdot[dotstyle=*](2,2)
\psdot[dotstyle=*](2,3)
\psdot[dotstyle=*](2,4)
\psdot[dotstyle=*](2,5)
\psdot[dotstyle=*](2,6)
\psdot[dotstyle=*](2,-1)
\psdot[dotstyle=*](2,-2)
\psdot[dotstyle=*](2,-3)

\end{pspicture}
\end{center}
\label{ejemplo-torcido}
\caption{\small Partition for $M=\{k\frac1{3}(1,0)+t(-1,1)\,:\, k\in \mathbb{Z}\,\,\textrm{and}\,\,t\in\mathbb{R}\}$.}
\end{figure}
\end{example}

Once the partition $\{B_{\s}\}_{\s\in\SN}$ is defined, we state a lemma which will be necessary to prove Theorem \ref{teo-subs}.

\begin{lemma}\label{U}
Let $S$ be a SIS and  $\s\in\SN$. Assume that the subspace $U_{\s}$ defined in (\ref{U-sigma}) satisfies $U_{\s}\subseteq S$. Then, $U_{\s}$ is an $M$-invariant space and in particular is a SIS.
\end{lemma}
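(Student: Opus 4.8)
The plan is to work entirely on the Fourier side and to reduce the $M$-invariance of $U_{\s}$ to the invariance of $\widehat{U_{\s}}$ under multiplication by the characters $e_m$, $m\in M$. First I would record the explicit description $\widehat{U_{\s}}=\{\chi_{B_{\s}}\widehat{g}:g\in S\}$. Using the hypothesis $U_{\s}\subseteq S$, every such $\chi_{B_{\s}}\widehat{g}$ lies in $\widehat{S}$, so in fact
$$\widehat{U_{\s}}=\widehat{S}\cap L^2(B_{\s}),$$
where $L^2(B_{\s})=\{h\in L^2(\R^d):h=0 \text{ a.e. outside } B_{\s}\}$. Being the intersection of two closed subspaces, $\widehat{U_{\s}}$ is closed, hence so is $U_{\s}$; this settles the closedness required in the definition of an $M$-invariant space. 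Since multiplication by $e_m$ preserves $L^2(B_{\s})$ (as $|e_m|=1$ and supports are unchanged), it remains to prove that $e_m\,h\in\widehat{S}$ for every $h\in\widehat{S}\cap L^2(B_{\s})$ and every $m\in M$.

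The heart of the argument is to replace the character $e_m$, which is only $M^*$-periodic, by a genuinely $\Z^d$-periodic function that agrees with it on $B_{\s}$. Recall from (\ref{0mega-tesela}) that $\W$ tiles $\R^d$ by $\Z^d$ and that $\s\in\SN\subseteq\Z^d$, so $\W+\s$ is again a fundamental domain for $\Z^d$. I would therefore define $\tau_m$ to be the unique $\Z^d$-periodic function with $\tau_m=e_m$ on $\W+\s$, and then check that $\tau_m=e_m$ on all of $B_{\s}=\bigcup_{m^*\in M^*}(\W+\s)+m^*$. Indeed, for $\w\in\W+\s$ and $m^*\in M^*\subseteq\Z^d$ one has $\tau_m(\w+m^*)=\tau_m(\w)=e_m(\w)$ by $\Z^d$-periodicity, while $e_m(\w+m^*)=e^{-2\pi i\langle m^*,m\rangle}e_m(\w)=e_m(\w)$ because $\langle m^*,m\rangle\in\Z$ by the very definition of $M^*$. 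Thus $\tau_m$ and $e_m$ coincide on $B_{\s}$. This step is where the hypotheses genuinely enter: it is exactly the integrality $\langle m^*,m\rangle\in\Z$ (equivalently, the $M^*$-periodicity of $e_m$ together with $M^*\subseteq\Z^d$) that makes the periodization consistent, and I expect this to be the main point of the proof.

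With $\tau_m$ in hand the conclusion is routine. For $h\in\widehat{S}\cap L^2(B_{\s})$ we have $e_m h=\tau_m h$, since $h$ vanishes off $B_{\s}$ where the two functions agree. Because $S$ is a SIS, $\widehat{S}$ is invariant under multiplication by bounded $\Z^d$-periodic functions: for a trigonometric polynomial $\sum_{k}c_k e_k$ the inverse transform of $(\sum_k c_k e_k)\widehat{g}$ is $\sum_k c_k t_k g\in S$, and approximating $\tau_m$ in the a.e.\ and uniformly bounded sense by such polynomials (for instance by Ces\`aro--Fej\'er means) and passing to the $L^2$-limit, using that $S$ is closed, yields $\tau_m\widehat{g}\in\widehat{S}$ for every $g\in S$. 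Hence $\tau_m h\in\widehat{S}$, so $e_m h\in\widehat{S}\cap L^2(B_{\s})=\widehat{U_{\s}}$. Therefore $t_m f\in U_{\s}$ for every $f\in U_{\s}$ and every $m\in M$, i.e.\ $U_{\s}$ is $M$-invariant; and since $\Z^d\subseteq M$, in particular $U_{\s}$ is a SIS.
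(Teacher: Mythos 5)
Your proof is correct, and its central construction is the same as the paper's: your $\Z^d$-periodization $\tau_m$ of $e_m$ restricted to the shifted fundamental domain $\W+\s$ is exactly the function $\ell_m$ built in the paper's proof, with the same key verification that $M^*\subseteq\Z^d$ and the $M^*$-periodicity of $e_m$ (i.e.\ $\langle m^*,m\rangle\in\Z$) make the periodization agree with $e_m$ on all of $B_{\s}$. You depart from the paper in the two supporting steps. For closedness, the paper runs a sequential argument (if $f_j\in U_{\s}$ and $f_j\to f$, then $f\in S$, $\widehat{f}\chi_{B_{\s}^c}=0$, hence $f\in U_{\s}$), whereas you derive from the hypothesis $U_{\s}\subseteq S$ the identity $\widehat{U_{\s}}=\widehat{S}\cap L^2(B_{\s})$ and note that an intersection of closed subspaces is closed; this is cleaner, and you reuse the identity at the end. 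For the final step, the paper invokes the characterization of principal shift-invariant spaces (Theorem \ref{rango-SIS-M}, from \cite{BDR94}) to get $\ell_m\widehat{g}\in\widehat{S(g)}\subseteq\widehat{S}$, while you re-prove the only case actually needed --- multiplication of $\widehat{S}$ by a \emph{bounded} $\Z^d$-periodic function --- directly: Fej\'er means of the unimodular $\tau_m$ are trigonometric polynomials uniformly bounded by $1$, converging a.e.\ (or in $L^2(\mathbb{T}^d)$ and hence a.e.\ along a subsequence, which suffices), so dominated convergence and the closedness of $S$ give $\tau_m\widehat{g}\in\widehat{S}$. The trade-off is clear: the paper's route is shorter because Theorem \ref{rango-SIS-M} is already available in the text, while yours is more self-contained and only uses the easy bounded-multiplier case of that theorem, at the cost of a routine approximation argument.
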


\begin{proof}
Let us first prove that $U_{\s}$ is closed.
Suppose that $f_j\in U_{\s}$ and $f_j\rightarrow f$ in $L^2(\R^d)$.
Since $U_{\s}\subseteq S$  and $S$ is closed, $f$ must be in $S$.
Further,
$$\|\widehat{f_j}-\widehat{f}\|^2_2=\|(\widehat{f_j}-\widehat{f})\chi_{B_{\s}}\|^2_2+\|(\widehat{f_j}-\widehat{f})\chi_{B_{\s}^c}\|^2_2
=\|\widehat{f_j}-\widehat{f}\chi_{B_{\s}}\|^2_2+\|\widehat{f}\chi_{B_{\s}^c}\|^2_2.$$

Since the left-hand side converges to zero, we must have that $\widehat{f}\chi_{B_{\s}^c}=0$ a.e.
$\w\in\R^d$, and $\widehat{f_j}\rightarrow \widehat{f}\chi_{B_{\s}}$ in $L^2(\R^d)$. So, as
$\widehat{f_j}\rightarrow \widehat{f}$ in $L^2(\R^d)$, we conclude that
$$\widehat{f}=\widehat{f}\chi_{B_{\s}},$$
which proves that $f\in U_{\s}$ and so $U_{\s}$ is closed.

Note that, since $\Z^d\subset M$, the $\Z^d$-invariance of $U_{\s}$ is a consequence of the $M$-invariance.

So, given $m\in M$ and $f\in U_{\s}$, we will prove that
$e_m\widehat{f}\in \widehat{U}_{\s}.$
Since $f\in U_{\s}$,
there exists $g\in S$ such that $\widehat{f}=\chi_{B_{\s}}\widehat{g}$.
Hence,
\begin{equation}\label{ecu-1-lema-U}
e_m\widehat{f}=e_m(\chi_{B_{\s}}\widehat{g})=\chi_{B_{\s}}(e_m\widehat{g}).
\end{equation}
If we can find a $\Z^d$-periodic function $\ell_m$ verifying
\begin{equation}\label{ecu-2-lema-U}
e_m(\w)=\ell_m(\w)\quad\textrm{a.e.}\,\w\in B_{\s},
\end{equation}
then, we can rewrite (\ref{ecu-1-lema-U}) as
$$e_m\widehat{f}=\chi_{B_{\s}}(\ell_m\widehat{g}).$$
By Theorem \ref{rango-SIS-M},  $\ell_m\widehat{g}\in \widehat{S(g)}\subset\widehat{S}$
and so, $e_m\widehat{f}\in \widehat{U}_{\s}.$

Let us now define the function $\ell_m$.
Note that, since $e_m$ is $M^*$-periodic,
\begin{equation}\label{ecu-3-lema-U}
e_m(\w+\s)=e_m(\w+\s+m^*)\quad\textrm{a.e.}\,\,\w\in\W,\, \forall\,m^*\in M^*.
\end{equation}
For each $k\in\Z^d$, set
\begin{equation}\label{ecu-4-lema-U}
\ell_m(\w+k)=e_m(\w+\s)\quad\textrm{a.e.}\,\,\w\in\W.
\end{equation}
It is clear that $\ell_m$ is $\Z^d$-periodic and combining (\ref{ecu-3-lema-U}) with (\ref{ecu-4-lema-U}), we obtain (\ref{ecu-2-lema-U}).

\end{proof}

\begin{proof}[Proof of Theorem \ref{teo-subs}]
i)\,$\Rightarrow$\,ii): Fix $\s\in \SN$ and $f\in U_{\s}$. Then $\widehat{f}=\chi_{B_{\s}}\widehat{g}$
for some $g\in S.$ Since $\chi_{B_{\s}}$ is an $M^*$-periodic function, by Theorem \ref{rango-SIS-M-2}, we have that $f\in S_M(g)\subseteq S$, as we wanted to prove.

ii)\,$\Rightarrow$\,i): Suppose that $U_{\s}\subseteq S$ for all $\s\in \SN$.
Note that Lemma \ref{U} implies that
$U_{\s}$ is $M$-invariant, and we also have that the subspaces $U_{\s}$ are mutually orthogonal
since the sets $B_{\s}$ are disjoint.

Take $f\in S$. Then, since $\{B_{\s}\}_{\s\in \SN}$ is a partition of $\R^d$, we can decompose $f$
as $f=\sum_{\s\in \SN}f^{\s}$ where $f^{\s}$ is such that $\widehat{f^{\s}}=\widehat{f}\chi_{B_{\s}}$.
This implies that $f\in \dotbigoplus_{\s\in \SN} U_{\s}$ and
consequently, $S$ is the orthogonal direct sum
$$S=\dotbigoplus_{\s\in \SN} U_{\s}.$$
As each $U_{\s}$ is $M$-invariant, so is $S$.
\end{proof}

\subsection{Characterization of $M$-invariance in terms of fibers}\noindent

A useful tool in the theory of shift-invariant spaces is based on
early work of Helson \cite{Hel64}.
An $L^2(\R^d)$ function is decomposed into ``fibers''.
This produces a characterization of SIS in terms of
closed subspaces of $\ell^2(\Z^d)$ (the fiber spaces).

For a detailed description of this approach, see \cite{Bow00} and the
references therein.

Given $f \in L^2(\R^d)$ and $\omega \in \W$,
the \emph{fiber} $\widehat{f}_\omega$ of~$f$ at~$\omega$ is the sequence
$$\widehat{f}_\omega = \{\widehat{f}(\omega+k)\}_{k \in \Z^d}.$$

If $f$ is in $L^2(\R^d)$, then the fiber $\widehat{f}_\omega$ belongs
to $\ell^2(\Z^d)$ for almost every $\omega \in \W$.

Given a  subspace $V$ of $L^2(\R^d)$ and $\omega \in \W$,
the \emph{fiber space} of~$V$ at~$\omega$ is
$$J_V(\omega)
=\overline{\{\widehat{f}_\omega :
    f \in V }\},$$
where the closure is taken in the norm of $\ell^2(\Z^d)$.

The map assigning to each $\omega$ the fiber space $J_V(\omega)$
is known in the literature as the {\it range function} of $V$.

The {\it dimension function} is defined by
$$\dim_V:\W\rightarrow \N_0\cup\{\infty\}, \quad \dim_V(\w):=\dim(J_V(\w)).$$

For a proof that, for almost every~$\omega$, $J_V(\omega)$ is a well-defined closed subspace
of $\ell^2(\Z^d)$  and that shift-invariant spaces can be characterized through range functions,
see \cite{Bow00}, \cite{Hel64}.

\begin{proposition}[\cite{Hel64}] \label{bownik-1}
If $S$ is a SIS, then
$$S \EQ \bigset{f \in L^2(\R^d) :
        \fhat_\omega \in J_S(\omega) \text{ for a.e. }\w\in\W }.$$
\end{proposition}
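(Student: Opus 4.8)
The statement characterizes a shift-invariant space $S$ as precisely those $f\in L^2(\R^d)$ whose fibers lie in the fiber space $J_S(\w)$ for almost every $\w\in\W$. The plan is to prove the two inclusions separately, using the fact that the Fourier transform composed with the fiberization map $f\mapsto\{\fhat_\w\}_{\w\in\W}$ is a unitary isomorphism from $L^2(\R^d)$ onto the direct integral $\int_\W^{\oplus}\ell^2(\Z^d)\,d\w$ (equivalently, $L^2(\W,\ell^2(\Z^d))$). Let me denote by $\Tc$ the subspace on the right-hand side, namely $\Tc=\{f\in L^2(\R^d):\fhat_\w\in J_S(\w)\text{ a.e.}\}$; the goal is $S=\Tc$.

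First I would prove $S\subseteq\Tc$. This direction is essentially immediate from the definition of the fiber space: for any $f\in S$ we have $\fhat_\w\in\{\ghat_\w:g\in S\}\subseteq J_S(\w)$ for almost every $\w$, since $J_S(\w)$ is by definition the closure of that set of fibers. The only subtlety is the "almost every" bookkeeping — one must ensure that the exceptional null set is uniform in $f$, but this is absorbed into the standard measurability facts about the range function cited from \cite{Bow00},\cite{Hel64}, so I would simply invoke them.

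The substantive direction is $\Tc\subseteq S$, and this is where I expect the main obstacle. I would prove it by showing that $\Tc$ is itself a closed shift-invariant space and that $\Tc\cap S^{\perp}=\{0\}$, whence $\Tc\subseteq S$. That $\Tc$ is shift-invariant follows because the fiber of $t_kf$ is $e_k(\w)\fhat_\w$, which differs from $\fhat_\w$ only by a unimodular scalar and hence stays in the (linear) subspace $J_S(\w)$; closedness of $\Tc$ follows from Fatou-type arguments on the fibers together with the fact that each $J_S(\w)$ is closed in $\ell^2(\Z^d)$. For the intersection with $S^{\perp}$, the key computational identity is that, under the unitary fiberization, the inner product in $L^2(\R^d)$ decomposes as an integral over $\W$ of the $\ell^2(\Z^d)$ inner products of the fibers, so that orthogonality of $g$ to $S$ forces $\ghat_\w\perp J_S(\w)$ for a.e. $\w$. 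Thus if $f\in\Tc\cap S^{\perp}$ then for a.e. $\w$ its fiber lies both in $J_S(\w)$ and in $J_S(\w)^{\perp}$, giving $\fhat_\w=0$ a.e. and hence $f=0$.

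The delicate point throughout is the interchange between the pointwise (in $\w$) structure of the fiber spaces and the global Hilbert-space structure of $S$: one must verify that $J_{S^{\perp}}(\w)=J_S(\w)^{\perp}$ for almost every $\w$, which is exactly the statement that orthogonal complementation commutes with fiberization. I would establish this from the direct-integral decomposition of the inner product above, taking care that the span defining $J_S(\w)$ can be generated, off a single null set, by a countable dense family of elements of $S$ (using separability of $L^2(\R^d)$); this countability is what makes the almost-everywhere statements simultaneously valid and is the technical heart of the argument. Once $J_{S^{\perp}}(\w)=J_S(\w)^{\perp}$ is in hand, both inclusions close up cleanly.
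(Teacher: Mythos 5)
The paper does not actually prove this proposition: it is stated as a quotation from Helson's book, with the well-definedness and measurability of the range function explicitly delegated to \cite{Hel64} and \cite{Bow00}, so there is no internal proof to compare yours against. Your argument is correct, and it is essentially the standard proof in those references (it is, in outline, how Bownik proves his Proposition 1.5): the inclusion $S\subseteq\Tc$ is definitional; $\Tc$ is a closed, shift-invariant subspace because $(\widehat{t_kf})_\w=e_k(\w)\fhat_\w$ and each $J_S(\w)$ is a closed subspace of $\ell^2(\Z^d)$; and $\Tc\cap S^\perp=\{0\}$ follows from the identity $\langle g,t_kf\rangle=\int_\W \langle \ghat_\w,\fhat_\w\rangle_{\ell^2(\Z^d)}\,e^{2\pi i\langle \w,k\rangle}\,d\w$, since the vanishing of all Fourier coefficients of the $L^1(\W)$ function $\w\mapsto\langle\ghat_\w,\fhat_\w\rangle$ forces $\ghat_\w\perp\fhat_\w$ a.e.\ for each fixed $f\in S$, and a countable dense family of such $f$ then gives $\ghat_\w\perp J_S(\w)$ a.e.; combined with $S\subseteq\Tc$ and closedness of $\Tc$, this yields $\Tc=S$. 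Your closing remark also identifies the one genuine technical point: with the paper's literal definition of $J_S(\w)$ as the closure of an uncountable family of a.e.-defined fibers, the fiber space is not well defined at any fixed $\w$; the rigorous route is to define $J_S(\w)$ through a fixed countable dense subset (or countable generating set, as in \cite{Bow00}) of $S$ and to check a.e.-independence of that choice, which is exactly the content the paper outsources to the references and which your outline handles correctly.
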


\begin{proposition} \label{ortho}
Let $S_1$ and $S_2$ be SISs. Then we have,
 $S = S_1 \dotoplus S_2$, if and only if
$$J_S(\omega)
\EQ J_{S_1}(\omega) \dotoplus J_{S_2}(\omega), \quad\text{a.e. }\w\in\W.$$
\end{proposition}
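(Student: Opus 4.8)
The plan is to transfer the whole statement to the fiberization picture, where the Plancherel identity reads
$$\langle f,g\rangle=\int_{\W}\langle \widehat{f}_{\w},\widehat{g}_{\w}\rangle\,d\w$$
for all $f,g\in L^2(\R^d)$ (this is legitimate because $\{\W+k\}_{k\in\Z^d}$ tiles $\R^d$, see \eqref{0mega-tesela}), and where SISs are reconstructed from their range functions via Proposition \ref{bownik-1}. I would first prove the forward implication as a self-contained fact and then reduce the converse to it.

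For the forward implication, assume $S=S_1\dotoplus S_2$; the crux is the a.e.\ orthogonality $J_{S_1}(\w)\perp J_{S_2}(\w)$. Fix $f_1\in S_1$ and $f_2\in S_2$. Since $t_kf_2\in S_2$ for every $k\in\Z^d$ and $S_1\perp S_2$, I would expand $\langle f_1,t_kf_2\rangle=0$ and fold the integral over the tiling $\R^d=\bigcup_k(\W+k)$ to obtain
$$\int_{\W}e^{2\pi i\langle \w,k\rangle}\,\langle \widehat{f_1}_{\w},\widehat{f_2}_{\w}\rangle\,d\w=0\qquad\text{for all }k\in\Z^d.$$
Because $\w\mapsto\langle \widehat{f_1}_{\w},\widehat{f_2}_{\w}\rangle$ is in $L^1(\W)$ and the exponentials $\{e^{2\pi i\langle\cdot,k\rangle}\}_{k\in\Z^d}$ are complete on $\W$ (they descend to the characters of $\R^d/\Z^d$ under the measure-preserving bijection $\W\to\R^d/\Z^d$), this forces $\langle \widehat{f_1}_{\w},\widehat{f_2}_{\w}\rangle=0$ a.e. Running this over countable families whose fibers generate $J_{S_1}(\w)$ and $J_{S_2}(\w)$ densely yields $J_{S_1}(\w)\perp J_{S_2}(\w)$ off a single null set. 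The two inclusions are then routine: for $f\in S$ writing $f=f_1+f_2$ with $f_i\in S_i$ gives $\widehat{f}_{\w}\in J_{S_1}(\w)+J_{S_2}(\w)$, so $J_S(\w)\subseteq J_{S_1}(\w)\dotoplus J_{S_2}(\w)$ (the orthogonal sum being closed), while the monotonicity $S_i\subseteq S$ forces $J_{S_i}(\w)\subseteq J_S(\w)$ and hence the reverse containment.

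For the converse, suppose $J_S(\w)=J_{S_1}(\w)\dotoplus J_{S_2}(\w)$ a.e. The orthogonality built into the right-hand side immediately gives $S_1\perp S_2$: for $f_i\in S_i$ the Plancherel identity yields $\langle f_1,f_2\rangle=\int_{\W}\langle \widehat{f_1}_{\w},\widehat{f_2}_{\w}\rangle\,d\w=0$. Thus $S':=S_1\dotoplus S_2$ is a genuine orthogonal direct sum, again a SIS, and applying the forward implication to $S'$ produces $J_{S'}(\w)=J_{S_1}(\w)\dotoplus J_{S_2}(\w)=J_S(\w)$ a.e. Since two SISs with the same range function coincide by Proposition \ref{bownik-1}, I conclude $S=S'=S_1\dotoplus S_2$.

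The step I expect to be the main obstacle is this a.e.\ orthogonality transfer: upgrading from orthogonality of single fiber pairs (one null set per pair) to orthogonality of the entire fiber subspaces requires separability of $S_1,S_2$ together with a careful choice of countable generating families, and it rests on the completeness of the exponential system over the skewed fundamental domain $\W=(T^*)^{-1}([0,1)^d)$. Everything else—the two inclusions and the reduction of the converse—should be formal once this orthogonality and Proposition \ref{bownik-1} are available.
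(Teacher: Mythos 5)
The paper never proves Proposition \ref{ortho}: it is quoted as background from the range-function machinery of Helson and Bownik (see the citations surrounding Proposition \ref{bownik-1}), so there is no in-paper argument to compare yours against; your proposal has to stand on its own, and it does. Your forward direction is the standard literature argument, correctly executed: expanding $\langle f_1,t_kf_2\rangle=0$, folding over the tiling $\R^d=\bigcup_{k\in\Z^d}(\W+k)$, and invoking uniqueness of Fourier coefficients in $L^1$ of the fundamental domain $\W$ is legitimate, since $|\W|=|\det (T^*)^{-1}|=1$ and the exponentials $e^{2\pi i\langle\cdot,k\rangle}$ descend to the full character system of $\R^d/\Z^d$; the upgrade from one null set per pair $(f_1,f_2)$ to a.e.\ orthogonality of the fiber spaces via countable dense families is exactly the right mechanism. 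Your treatment of the converse is an efficient organizational touch that avoids re-doing any fiber computation: $S':=S_1\dotoplus S_2$ is closed (an orthogonal sum of closed subspaces is closed) and shift-invariant, the forward direction gives $J_{S'}(\w)=J_{S_1}(\w)\dotoplus J_{S_2}(\w)=J_S(\w)$ a.e., and then $S=S'$ because Proposition \ref{bownik-1} makes a SIS recoverable from its range function. The only point deserving emphasis is the one you yourself flag: the fact that the fibers of a countable dense subset of a SIS span $J_S(\w)$ densely for a.e.\ $\w$ is not a formal consequence of the bare statement of Proposition \ref{bownik-1}; it belongs to the measurability half of the Helson--Bownik correspondence (one can also extract it from Proposition \ref{bownik-1} plus a measurable-selection argument). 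Since the paper itself uses this fact without comment, e.g.\ in (\ref{JUsigma}), relying on it keeps you squarely within the paper's toolkit, and your proof is complete modulo that standard ingredient.
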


Let $\Phi= \set{\varphi_1,\dots,\varphi_{\ell}}$ be a finite collection of functions in $L^2(\R^d)$.
Then the \emph{Gramian} $G_\Phi$ of $\Phi$ is the
$\ell \times \ell$ matrix of $\Z^d$-periodic functions
\begin{equation} \label{gram}
[G_{\Phi}(\omega)]_{ij}
=
\Big\langle (\widehat{\varphi_i})_{\omega}, (\widehat{\varphi_j})_{\omega}\Big\rangle
= \sum_{k \in \Z^d} \widehat{\varphi_i}(\omega+k) \, \overline{\widehat{\varphi_j}(\omega+k)},
\qquad \omega\in \W.
\end{equation}

So, we have the following relation whose proof is straightforward.

\begin{proposition}\label{rango-gramiano}
Let $S$ be an FSIS generated by $\Phi$. Therefore,
$$\dim_S(\w)=\textnormal{rank}[G_{\Phi}(\w)] \quad\text{ a.e. }\w\in \W.$$
\end{proposition}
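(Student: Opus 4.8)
The plan is to split the statement into two independent facts: an identification of the fiber space $J_S(\omega)$ with the span of the fibers of the generators, which comes from the range-function theory, and an elementary linear-algebra identity relating the rank of a Gramian to the dimension of a span. Write $\Phi=\{\varphi_1,\dots,\varphi_\ell\}$ and set $v_i(\omega):=(\widehat{\varphi_i})_\omega\in\ell^2(\Z^d)$. By the definition \eqref{gram}, the Gramian entries are exactly $[G_\Phi(\omega)]_{ij}=\langle v_i(\omega),v_j(\omega)\rangle$, so $G_\Phi(\omega)$ is precisely the Gram matrix of the finite family $v_1(\omega),\dots,v_\ell(\omega)$. Since $\dim_S(\omega)=\dim J_S(\omega)$ by definition, it suffices to prove $\dim J_S(\omega)=\rank G_\Phi(\omega)$ for a.e.\ $\omega\in\W$.

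First I would show that, for a.e.\ $\omega\in\W$,
$$J_S(\omega)=\Span\{v_1(\omega),\dots,v_\ell(\omega)\}.$$
The inclusion $\supseteq$ is immediate: each $\varphi_i\in S$, so $v_i(\omega)\in J_S(\omega)$, and $J_S(\omega)$ is a subspace. For the reverse inclusion, note that any element of $\Span E(\Phi)$ is a finite combination $\sum_{i,k}c_{i,k}\,t_k\varphi_i$, whose fiber at $\omega$ equals $\sum_{i,k}c_{i,k}\,e_k(\omega)\,v_i(\omega)$, because $\widehat{(t_k\varphi_i)}_\omega=e_k(\omega)\,v_i(\omega)$ (using $e_k(\omega+j)=e_k(\omega)$ for $k,j\in\Z^d$); this clearly lies in $\Span\{v_i(\omega)\}$. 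Since $S=\clspan E(\Phi)$, a general $f\in S$ is an $L^2$-limit of such combinations; a subsequence has fibers converging in $\ell^2(\Z^d)$ for a.e.\ $\omega$, and because $\Phi$ is finite the span $\Span\{v_i(\omega)\}$ is finite-dimensional, hence closed, so the limit fiber stays inside it. Taking the closure in the definition of $J_S(\omega)$ then gives $J_S(\omega)\subseteq\Span\{v_i(\omega)\}$. In particular $\dim J_S(\omega)=\dim\Span\{v_1(\omega),\dots,v_\ell(\omega)\}$.

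Second I would invoke the linear-algebra identity that for any finite family $v_1,\dots,v_\ell$ in an inner-product space with Gram matrix $G$, $G_{ij}=\langle v_i,v_j\rangle$, one has $\rank G=\dim\Span\{v_1,\dots,v_\ell\}$. This follows from the positive-semidefinite quadratic form
$$\Big\|\sum_{i=1}^\ell c_i v_i\Big\|^2=\sum_{i,j=1}^\ell c_i\,\overline{c_j}\,\langle v_i,v_j\rangle,$$
which vanishes exactly when $\sum_i c_i v_i=0$, and a short computation shows the vectors $c$ with this property are the conjugates of the elements of $\ker G$; thus $\dim\{c:\sum_i c_i v_i=0\}=\dim\ker G=\ell-\rank G$. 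Since the image of the linear map $c\mapsto\sum_i c_i v_i$ is $\Span\{v_i\}$, rank--nullity gives $\dim\Span\{v_i\}=\ell-\dim\ker G=\rank G$. Applying this with $v_i=v_i(\omega)$ and combining with the first step yields $\dim_S(\omega)=\rank G_\Phi(\omega)$ for a.e.\ $\omega$.

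The main obstacle, and really the only delicate point, is the a.e.\ identification $J_S(\omega)=\Span\{v_i(\omega)\}$: the elementary approximation argument above produces an exceptional null set that a priori depends on the chosen $f$, so making the statement hold simultaneously for all of $S$ requires the measurability of the range function and the standard range-function characterization of shift-invariant spaces (Proposition \ref{bownik-1} and \cite{Bow00}). Once that identification is in place, everything reduces to the finite-dimensional Gramian computation, which is why the conclusion is straightforward.
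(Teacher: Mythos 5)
Your proof is correct and is essentially the argument the paper has in mind: the paper gives no written proof (it declares the result ``straightforward''), because it takes for granted the identification of the fiber space of a finitely generated SIS with the span of the generators' fibers --- the same fact from the range-function theory of \cite{Bow00} that you invoke to handle the null-set subtlety, and which the paper itself uses without proof in (\ref{JUsigma}). Your second step, that the rank of the Gram matrix of $v_1(\w),\dots,v_\ell(\w)$ equals $\dim\Span\{v_1(\w),\dots,v_\ell(\w)\}$, is exactly the elementary linear algebra that makes the conclusion immediate.
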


Now, if $f\in L^2(\R^d)$ and $\s\in \SN$, let $f^{\s}$ denote the function defined by
$$\widehat{f^{\s}}=\widehat{f}\chi_{B_{\s}}.$$

Let $P_{\s}$ be the orthogonal projection onto $S_{\s}$, where
$$S_{\s}:=\{f\in L^2(\R^d)\,:\,\text{ supp}(\widehat{f})\subset B_{\s}\}.$$
Therefore
$$f^{\s}=P_{\s}f\quad\text{and}\quad U_{\s}=P_{\s}(S)=\{f^{\s}\,:\,f\in S\}.$$
Moreover, if $S=S(\Phi)$ with $\Phi$ a countable subset of $L^2(\R^d)$, then
\begin{equation}\label{JUsigma}
J_{U_{\s}}(\w)=\overline{\textnormal{span}}\{(\widehat{\varphi^{\s}})_{\w}\,:\,\varphi\in\Phi\}.
\end{equation}

\begin{remark}\label{fibra-cut-off}

Note that $(\widehat{\varphi^{\s}})_{\w}=\{\chi_{B_{\s}}(\w+k)\widehat{\varphi}(\w+k)\}_{k\in\Z^d}.$
\medskip
Then, since $\chi_{B_{\s}}(\w+k)\neq0$ if and only if $k\in\s+M^*$,
\begin{equation*}
\chi_{B_{\s}}(\w+k)\widehat{\varphi}(\w+k)=
\begin{cases}
\widehat{\varphi}(\w+k)&\text{ if }k\in\s+M^*\\
0&\text{ otherwise }.
\end{cases}
\end{equation*}
Therefore, if $\s\neq\s'$, $J_{U_{\s}}(\w)$ is orthogonal to $J_{U_{\s'}}(\w)$ for a.e. $\w\in\W$.
\end{remark}

Combining Theorem~\ref{teo-subs} with Proposition~\ref{bownik-1} and (\ref{JUsigma})
we obtain the following proposition.
\begin{theorem}\label{fibras-en-S}
Let $S$ be a SIS generated by $\Phi$. The following statements are equivalent.
\begin{enumerate}
\item[(i)]
$S$ is $M$-invariant.
\item[(ii)]
$(\widehat{\varphi^{\s}})_{\w}\in J_S(\w)$ a.e. $\w\in\W$
for all  $\varphi\in\Phi$ and $\s\in\SN$.
\end{enumerate}
\end{theorem}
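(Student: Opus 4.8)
The plan is to read off this equivalence directly from Theorem~\ref{teo-subs}, which already reduces the $M$-invariance of $S$ to the inclusions $U_{\s}\subseteq S$ for every $\s\in\SN$, and then to transcribe each such inclusion into fiber language by means of Proposition~\ref{bownik-1} and the identity (\ref{JUsigma}). Throughout I would keep in mind that both relevant index sets, namely $\Phi$ (countable by hypothesis) and $\SN$ (a section of $\Z^d/M^*$, hence a countable subset of $\Z^d$), are countable, so that all the ``a.e.'' statements below may be intersected over $\varphi\in\Phi$ and $\s\in\SN$ without spoiling the full-measure conclusion.

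For the implication (i)$\Rightarrow$(ii): assuming $S$ is $M$-invariant, Theorem~\ref{teo-subs} gives $U_{\s}\subseteq S$ for all $\s\in\SN$. Fix $\varphi\in\Phi$ and $\s\in\SN$. Since every generator lies in its own generated space, $\varphi\in S$, and because $\widehat{\varphi^{\s}}=\chi_{B_{\s}}\widehat{\varphi}$, the very definition (\ref{U-sigma}) of $U_{\s}$ shows $\varphi^{\s}\in U_{\s}\subseteq S$. Applying Proposition~\ref{bownik-1} to the element $\varphi^{\s}\in S$ yields $(\widehat{\varphi^{\s}})_{\w}\in J_S(\w)$ for a.e. $\w\in\W$, which is precisely (ii).

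For the converse (ii)$\Rightarrow$(i): I would again verify the hypothesis of Theorem~\ref{teo-subs}, that is, $U_{\s}\subseteq S$ for every $\s$. Fix $\s\in\SN$. By assumption $(\widehat{\varphi^{\s}})_{\w}\in J_S(\w)$ a.e. for each $\varphi\in\Phi$; since $J_S(\w)$ is a closed subspace of $\ell^2(\Z^d)$ for a.e. $\w$, the closed linear span of these vectors is also contained in $J_S(\w)$, and by (\ref{JUsigma}) this closed span equals $J_{U_{\s}}(\w)$. Hence $J_{U_{\s}}(\w)\subseteq J_S(\w)$ for a.e. $\w\in\W$. Now take any $f\in U_{\s}$; directly from the definition of the fiber space its fiber satisfies $\widehat{f}_{\w}\in J_{U_{\s}}(\w)\subseteq J_S(\w)$ a.e., so Proposition~\ref{bownik-1} forces $f\in S$. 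This gives $U_{\s}\subseteq S$ for every $\s\in\SN$, and Theorem~\ref{teo-subs} then yields that $S$ is $M$-invariant.

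The only point demanding care, rather than a genuine obstacle, is the measure-theoretic bookkeeping: the conclusion of (ii) and the inclusion $J_{U_{\s}}(\w)\subseteq J_S(\w)$ each hold off a null set depending on the pair $(\varphi,\s)$ or on $\s$, and one must discard the union of these countably many null sets so that a single full-measure set of $\w$ serves simultaneously. All the substance has already been invested in Theorem~\ref{teo-subs} and in the range-function characterization of Proposition~\ref{bownik-1}; the present statement merely repackages those results as a pointwise criterion on the fibers of the generators.
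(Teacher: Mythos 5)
Your proof is correct and follows exactly the route the paper intends: the paper offers no detailed argument, stating only that the theorem follows by ``combining Theorem~\ref{teo-subs} with Proposition~\ref{bownik-1} and (\ref{JUsigma})'', and your proposal is precisely the careful expansion of that combination, including the countability bookkeeping needed to intersect the null sets. Nothing further is required.
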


Now, using Proposition \ref{ortho}, Proposition \ref{rango-gramiano}, Theorem \ref{fibras-en-S} and 
Remark \ref{fibra-cut-off}, we give a slightly simpler
characterization of $M$-invariance for the finitely generated case.

\begin{theorem} \label{fibers}
If $S$ is an FSIS generated by $\Phi$, then
the following statements are equivalent.

\begin{enumerate}
\item[(a)]
$S$ is $M$-invariant.

\vspace{.25cm}
\item[(b)]
For almost every $\omega \in \W$,\,
$\dim_S(\omega)
= \sum_{\s\in\SN} \dim_{U_{\s}}(\omega).$
\vspace{.25cm}
\item[(c)]
For almost every $\omega \in \W$,\,
$\textnormal{rank}[G_{\Phi}(\w)]=\sum_{\s\in\SN}\textnormal{rank}[G_{\Phi^{\s}}(\w)],$
\vspace{.1cm}

where $\Phi^{\s}=\{\varphi^{\s}\,:\,\varphi\in\Phi\}.$
\end{enumerate}
\end{theorem}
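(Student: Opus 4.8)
The plan is to reduce all three statements to a single pointwise identity between fiber spaces in $\ell^2(\Z^d)$ and then read off (a), (b), (c) from it. The structural fact driving everything is that, for a fixed $\w\in\W$, the cosets $\{\s+M^*\}_{\s\in\SN}$ partition $\Z^d$ (because $\SN$ is a section of $\Z^d/M^*$), so $\ell^2(\Z^d)$ splits as the orthogonal sum $\dotbigoplus_{\s\in\SN}\ell^2(\s+M^*)$. Under this splitting the fiber $(\widehat\varphi)_\w$ decomposes as $(\widehat\varphi)_\w=\sum_{\s\in\SN}(\widehat{\varphi^{\s}})_\w$, the $\s$-th summand being exactly the piece supported on $\s+M^*$ described in Remark \ref{fibra-cut-off}. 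By (\ref{JUsigma}) each $J_{U_{\s}}(\w)$ lies in $\ell^2(\s+M^*)$, so by Remark \ref{fibra-cut-off} the subspaces $\{J_{U_{\s}}(\w)\}_{\s\in\SN}$ are mutually orthogonal a.e.

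First I would record the inclusion that holds with no hypothesis on $S$. Since $S=S(\Phi)$, the fiber description gives $J_S(\w)=\overline{\text{span}}\{(\widehat\varphi)_\w:\varphi\in\Phi\}$, and each generating fiber is the orthogonal sum of its pieces $(\widehat{\varphi^{\s}})_\w\in J_{U_{\s}}(\w)$; hence $J_S(\w)\subseteq\dotbigoplus_{\s\in\SN}J_{U_{\s}}(\w)$ for a.e. $\w$. Next I would characterize (a): by Theorem \ref{fibras-en-S}, $S$ is $M$-invariant iff $(\widehat{\varphi^{\s}})_\w\in J_S(\w)$ for a.e. $\w$, every $\varphi\in\Phi$ and every $\s\in\SN$; taking closed spans and using (\ref{JUsigma}), this is the same as $J_{U_{\s}}(\w)\subseteq J_S(\w)$ for all $\s$, which by mutual orthogonality is equivalent to $\dotbigoplus_{\s\in\SN}J_{U_{\s}}(\w)\subseteq J_S(\w)$. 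Combining with the universal inclusion, (a) is equivalent to the pointwise identity $J_S(\w)=\dotbigoplus_{\s\in\SN}J_{U_{\s}}(\w)$ a.e. $\w\in\W$.

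To obtain (b) I would take dimensions: orthogonality gives $\dim\big(\dotbigoplus_{\s}J_{U_{\s}}(\w)\big)=\sum_{\s\in\SN}\dim_{U_{\s}}(\w)$, and the universal inclusion yields $\dim_S(\w)\le\sum_{\s}\dim_{U_{\s}}(\w)$. Since $S$ is an FSIS, $\dim_S(\w)<\infty$ a.e., so an inclusion of fiber spaces with equal finite dimensions forces equality; thus (b) is precisely the assertion that the universal inclusion is an equality, i.e. (a)$\Leftrightarrow$(b). Finally (c) follows by translating dimensions into Gramian ranks: Proposition \ref{rango-gramiano} gives $\dim_S(\w)=\textnormal{rank}[G_\Phi(\w)]$, while (\ref{JUsigma}) identifies $J_{U_{\s}}(\w)$ with the closed span of $\{(\widehat{\varphi^{\s}})_\w:\varphi\in\Phi\}$, whose dimension is $\textnormal{rank}[G_{\Phi^{\s}}(\w)]$; substituting shows (b) and (c) are the same identity, so (b)$\Leftrightarrow$(c).

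The step I expect to require the most care is the dimension count over the possibly countably infinite index set $\SN$, which is infinite precisely when $M$ is non-discrete. I must make sure the orthogonal direct sum and its dimension behave correctly for infinitely many summands, and that it is exactly the finite-generation hypothesis, $\dim_S(\w)<\infty$ a.e., that both upgrades equality of dimensions to equality of subspaces and guarantees $\sum_{\s}\dim_{U_{\s}}(\w)$ is finite whenever (b) holds.
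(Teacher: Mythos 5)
Your proof is correct and follows essentially the route the paper intends: the paper gives no written proof but states that the theorem follows by combining Theorem \ref{fibras-en-S}, Remark \ref{fibra-cut-off}, (\ref{JUsigma}) and Proposition \ref{rango-gramiano}, which are exactly the ingredients you assemble into the pointwise identity $J_S(\w)=\dotbigoplus_{\s\in\SN}J_{U_{\s}}(\w)$. Your explicit attention to the countable index set $\SN$ and to the role of finite generation in passing from equal dimensions back to equality of subspaces fills in precisely the details the paper leaves implicit.
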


\section{Applications of $M$-invariance}\label{sec-5}

In this section we present two applications of the  results given before.
First,  we will
estimate  the size of the supports of the Fourier transforms of the generators
of an FSIS which is also $M$-invariant.
Finally, given $M$ a closed subgroup of $\R^d$ containing $\Z^d$, we will construct a SIS $S$ which is exactly $M$-invariant. That is, $S$ is not invariant under any other closed subgroup containing $M$.

\begin{theorem}\label{soporte}
Let $S$ be an FSIS 
generated by $\{\vp_1,\ldots,\vp_{\ell}\}$, and define
$$ E_j=\{\w\in\W\,: \, \dim _S(\w)=j\},\quad j=0,\ldots, \ell.$$
If $S$ is $M$-invariant and $\SR'$ is any measurable section of $\R^d/M^*$, then
$$|\{ y\in \SR'\,: \, \widehat{\vp_h}(y)\neq 0\}|\leq \sum_{j=0}^\ell j|E_j|\leq \ell,$$
for each $h=1,\ldots, \ell$.
\end{theorem}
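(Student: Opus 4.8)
The plan is to reinterpret $\sum_{j=0}^{\ell} j|E_j|$ as the integral $\int_{\W}\dim_S(\w)\,d\w$ and to bound the support measure by a pointwise counting argument against the dimension function. The second inequality is immediate: since $S$ is generated by $\ell$ functions, $\dim_S(\w)\le\ell$ a.e., the sets $E_0,\dots,E_\ell$ partition $\W$, and $|\W|=|(T^*)^{-1}([0,1)^d)|=1$ because $T$ sends the standard basis to a basis of $\Z^d$, so $|\det T|=1$. Hence $\sum_{j=0}^{\ell}j|E_j|\le \ell\sum_{j=0}^{\ell}|E_j|=\ell|\W|=\ell$.

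For the first inequality I would first exploit $M$-invariance. By Theorem~\ref{fibers}, $M$-invariance yields $\dim_S(\w)=\sum_{\s\in\SN}\dim_{U_{\s}}(\w)$ for a.e. $\w\in\W$, and by (\ref{JUsigma}) together with Remark~\ref{fibra-cut-off} the fiber $(\widehat{\vp_h^{\s}})_{\w}$ is obtained from $(\widehat{\vp_h})_{\w}$ by keeping only the indices $k\in\s+M^*$. The key observation is then: if $\widehat{\vp_h}(\w+\s+m^*)\neq0$ for some $m^*\in M^*$, then the vector $(\widehat{\vp_h^{\s}})_{\w}\in J_{U_{\s}}(\w)$ is nonzero, so $\dim_{U_{\s}}(\w)\ge1$. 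Consequently, for any assignment $\s\mapsto m_{\s}^*\in M^*$,
\begin{equation*}
\#\{\s\in\SN:\widehat{\vp_h}(\w+\s+m_{\s}^*)\neq0\}\le\#\{\s\in\SN:\dim_{U_{\s}}(\w)\ge1\}\le\sum_{\s\in\SN}\dim_{U_{\s}}(\w)=\dim_S(\w).
\end{equation*}
The essential feature, which is exactly what makes the estimate uniform over all sections, is that this bound does not depend on which representative $m_{\s}^*$ is chosen in each coset $\s+M^*$.

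Next I would transfer an arbitrary measurable section $\SR'$ of $\R^d/M^*$ onto $\W$. Since $\{B_{\s}\}_{\s\in\SN}$ partitions $\R^d$ and $B_{\s}=(\W+\s)+M^*$, the piece $\SR'\cap B_{\s}$ is a section of $B_{\s}/M^*$; hence for a.e. $\w\in\W$ there is a unique $m_{\s}^*(\w)\in M^*$ with $\w+\s+m_{\s}^*(\w)\in\SR'$, and $\w\mapsto\w+\s+m_{\s}^*(\w)$ is a measure-preserving bijection of $\W$ onto $\SR'\cap B_{\s}$ (a translation on each level set of the measurable function $m_{\s}^*$). Writing the support of $\widehat{\vp_h}$ in $\SR'$ as the disjoint union over $\s$ of its parts in $B_{\s}$ and applying this change of variables together with Tonelli's theorem gives
\begin{equation*}
|\{y\in\SR':\widehat{\vp_h}(y)\neq0\}|=\int_{\W}\#\{\s\in\SN:\widehat{\vp_h}(\w+\s+m_{\s}^*(\w))\neq0\}\,d\w\le\int_{\W}\dim_S(\w)\,d\w=\sum_{j=0}^{\ell}j|E_j|,
\end{equation*}
where the inequality is the pointwise bound above and the last equality uses $\dim_S\equiv j$ on $E_j$.

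I expect the main obstacle to be the passage from the canonical section $\W+\SN$ to an arbitrary measurable section $\SR'$: one must verify that $\SR'\cap B_{\s}$ is genuinely parametrized by $\W$ through a measurable, measure-preserving map, and --- more importantly --- that the pointwise dimension bound persists no matter which coset representatives $\SR'$ selects. Once that uniformity is secured, the remainder is just the partition bookkeeping with $\{B_{\s}\}$ and the interchange of sum and integral.
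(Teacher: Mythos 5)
Your proposal is correct and follows essentially the same route as the paper's proof: the identical pointwise counting bound $\#\{\s\in\SN:\widehat{\vp_h}(\w+\s+m^*_{\s}(\w))\neq0\}\le\sum_{\s\in\SN}\dim_{U_{\s}}(\w)=\dim_S(\w)$ obtained from Remark~\ref{fibra-cut-off} and the $M$-invariance, the same selection of a unique representative $m^*_{(\w,\s)}$ placing $\w+\s$ into $\SR'$, and the same Fubini/Tonelli interchange to convert the support measure into $\int_{\W}\dim_S(\w)\,d\w=\sum_{j=0}^{\ell}j|E_j|$. The only difference is that you make explicit two points the paper leaves implicit, namely that the piecewise-translation map $\w\mapsto\w+\s+m^*_{\s}(\w)$ is measure preserving and that $|\W|=1$.
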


\begin{proof}
The measurability of the sets $E_j$ follows from the results
of Helson \cite{Hel64}, e.g., see \cite{BK06} for an argument of this type.

Fix any $h\in\{0,\ldots,\ell\}$.  Note that, as a consequence of Remark \ref{fibra-cut-off},
if $J_{U_{\s}}(\w)=\{0\}$, then $\widehat{\vp_h}(\w+\s+m^*)=0$ for all $m^*\in M^*$.

On the other hand, since $\{\W+\s+m^*\}_{\s\in\SN, m^*\in M^*}$ is a partition of $\R^d$, if $\w\in\W$ and $\s\in\SN$ are fixed, there exists a unique $m^*_{(\w,\s)}\in M^*$ such
that $\w+\s+m^*_{(\w,\s)}\in\SR'$.

So,
$$\{\s\in \SN\,:\,\widehat{\vp_h}(\w+\s+m^*_{(\w,\s)})\neq 0\}\subset \{\s\in \SN\,: \, \dim _{U_{\s}}(\w)\neq 0\}.$$
Therefore
\begin{eqnarray*}\label{dim}
\#\{\s\in \SN\,:\,\widehat{\vp_h}(\w+\s+m^*_{(\w,\s)})\neq 0\}&\leq& \#\{\s\in \SN\,: \, \dim _{U_{\s}}(\w)\neq 0\}\\
&\leq&\sum_{\s\in \SN}\dim_{U_{\s}}(\w)\\
&=&\dim _S(\w).
\end{eqnarray*}

Consequently, by Fubini's Theorem,
\begin{align*}
|\{y\in \SR'\,:\,\widehat{\vp_h}(y)\neq 0\}|&= \sum_{\s\in \SN} |\{\w\in \W\,:\,\widehat{\vp_h}(\w+\s+m^*_{(\w,\s)})\neq 0\}|\\
&=|\{(\w,\s)\in \W\times \SN\,: \, \widehat{\vp_h}(\w+\s+m^*_{(\w,\s)})\neq 0\}|\\
&=\int_{\W}\#\{\s\in \SN\,:\,\widehat{\vp_h}(\w+\s+m^*_{(\w,\s)})\neq 0\}\, dw\\
&\leq \int_{\W} \dim _{S}(\w) dw=\sum_{j=0}^{\ell} j |E_j|\leq \ell.
\end {align*}

\end{proof}

When $M$ is not discrete, the previous theorem shows that, despite the fact that $\SR'$ has infinite measure, the support of $\widehat{\varphi_h}$ in $\SR'$  has finite measure.

On the other hand, if $M$ is discrete,  the measure of $\SR'$ is equal to the measure of the section $\SR$ given by (\ref{R-def}). That is
$$|\SR'|=|\SR|=a_1\ldots a_d,$$
where $a_1,\ldots, a_d$ are the invariant factors. Thus, if $a_1\ldots a_d-\ell>0$, it follows that
\begin{equation}\label{medida-D'}
|\{ y\in \SR'\,: \, \widehat{\vp_h}(y)= 0\}|\geq a_1\ldots a_d-\ell.
\end{equation}

\begin{corollary}\label{vanish}
Let $\varphi\in L^2(\R^d)$ be given. If the SIS $S(\varphi)$ is $M$-invariant for some closed subgroup $M$ of $\R^d$ such that $\Z^d \subsetneqq M$, then $\widehat{\varphi}$ must vanish on a set of infinite Lebesgue measure.
\end{corollary}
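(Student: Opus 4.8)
The plan is to apply Theorem \ref{soporte} to the principal case $\ell=1$ and then exploit the fact that $M$ is strictly larger than $\Z^d$. Since $S(\varphi)$ is generated by a single function, the dimension function $\dim_S(\w)$ takes only the values $0$ and $1$, so in the notation of Theorem \ref{soporte} we have $E_0$ and $E_1$ with $|E_1|\le |\W|=1$. The theorem then tells us that for any measurable section $\SR'$ of $\R^d/M^*$,
$$|\{y\in\SR'\,:\,\widehat{\varphi}(y)\neq 0\}|\leq |E_1|\leq 1.$$
In particular, the support of $\widehat{\varphi}$ meets each section $\SR'$ in a set of finite (indeed, at most unit) measure.

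Next I would use the hypothesis $\Z^d\subsetneqq M$ to conclude that $M^*\subsetneqq \Z^d$, hence $M^*$ is a proper subgroup of $\Z^d$. By Proposition \ref{propM*}(ii) (applied to $\Z^d\subset M$) together with $(\Z^d)^*=\Z^d$, we indeed get $M^*\subset\Z^d$, and the inclusion is strict precisely because $M\neq\Z^d$. The key structural point is that a section $\SR'$ of $\R^d/M^*$ has infinite Lebesgue measure whenever $M^*$ is a proper subgroup of $\Z^d$: using the description of $M^*$ from Corollary \ref{como-es-M*} as a lattice of rank $q=d-\mathbf{d}(M)$ generated by $\{a_1w_1,\ldots,a_qw_q\}$, the section $\SR=(T^*)^{-1}([0,a_1)\times\dots\times[0,a_q)\times\R^{d-q})$ from (\ref{R-def}) has infinite measure exactly when either $d-q\ge 1$ (so there is a genuine $\R^{d-q}$ factor, i.e. $M$ is not discrete) or when $q=d$ but the covolume $a_1\cdots a_d$ exceeds the covolume of $\Z^d$, which equals $1$. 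Since $M^*\subsetneqq\Z^d$ forces $a_1\cdots a_d>1$ in the discrete case, in either case $|\SR'|=\infty$.

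The heart of the argument is then a counting step. The section $\SR'$ has infinite measure, yet $\widehat{\varphi}$ is nonzero on at most a unit-measure subset of $\SR'$; therefore $\widehat{\varphi}$ vanishes on $\SR'$ except on a set of measure at most $1$, so $|\{y\in\SR'\,:\,\widehat{\varphi}(y)=0\}|=\infty$. This already exhibits a set of infinite measure on which $\widehat{\varphi}$ vanishes, completing the proof. I expect the main obstacle to be the bookkeeping in the second paragraph: one must carefully verify that a section of $\R^d/M^*$ really does have infinite measure under the hypothesis $\Z^d\subsetneqq M$, splitting into the non-discrete case ($\mathbf{d}(M)\ge 1$, where the $\R^{d-q}$ factor makes $|\SR'|=\infty$ immediately) and the discrete case ($\mathbf{d}(M)=0$, where one invokes (\ref{medida-D'}) with $\ell=1$ and the strict inequality $a_1\cdots a_d>1=|\Z^d\text{-covolume}|$ guaranteed by the strictness $M^*\subsetneqq\Z^d$). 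Everything else follows formally from Theorem \ref{soporte}.
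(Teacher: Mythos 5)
Your argument is fine in the non-discrete case, but it breaks down exactly where you anticipated trouble: the claim that a measurable section $\SR'$ of $\R^d/M^*$ has infinite measure whenever $M^*\subsetneqq\Z^d$ is false. If $M$ is discrete (i.e.\ $\mathbf{d}(M)=0$, e.g.\ $M=\frac{1}{2}\Z\times\Z^{d-1}$), then $q=d$ and $M^*$ is a full-rank sublattice of $\Z^d$, so the section $\SR=(T^*)^{-1}\big([0,a_1)\times\dots\times[0,a_d)\big)$ has measure exactly $a_1\cdots a_d$ (the map $(T^*)^{-1}$ has determinant $\pm 1$): strictly larger than $1$, but finite. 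The paper states this explicitly right after Theorem \ref{soporte}, where it records $|\SR'|=|\SR|=a_1\cdots a_d$ for discrete $M$. Consequently, in the discrete case your counting step yields nothing: Theorem \ref{soporte} with $\ell=1$, equivalently (\ref{medida-D'}), only shows that $\widehat{\vp}$ vanishes on a subset of $\SR'$ of measure at least $a_1\cdots a_d-1$, a finite positive quantity, and your proof stops short of producing a set of infinite measure.

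The missing idea is to sum over the $M^*$-translates of the section. Since $\{\SR+m^*\}_{m^*\in M^*}$ is a partition of $\R^d$, one has
$$|\{y\in\R^d\,:\,\widehat{\vp}(y)=0\}|=\sum_{m^*\in M^*}|\{y\in\SR+m^*\,:\,\widehat{\vp}(y)=0\}|,$$
and each translate $\SR+m^*$ is itself a measurable section of $\R^d/M^*$, so by (\ref{medida-D'}) each term is at least $a_1\cdots a_d-1>0$. Since $M^*$ is an infinite set (a rank-$d$ lattice), the sum diverges, which is the conclusion. This is precisely how the paper's proof treats the discrete case; your treatment of the non-discrete case (the $\R^{d-q}$ factor forces $|\SR'|=\infty$ while the support of $\widehat{\vp}$ inside $\SR'$ has measure at most $1$, so the zero set inside a single section already has infinite measure) agrees with the paper's and is correct as written.
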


\begin{proof}

Let $\SR$ be the measurable section of  $\R^d/M^*$ defined in (\ref{R-def}). Then,
$$\R^d=\bigcup_{m^*\in M^*}\SR+m^*,$$ thus
\begin{equation*}
|\{ y\in \R^d\,: \, \widehat{\varphi}(y)=0\}|=\sum_{m^*\in M^*}|\{ y\in \SR+m^*\,: \, \widehat{\varphi}(y)= 0\}|.
\end{equation*}
If $M$ is discrete, by (\ref{medida-D'}), we have
\begin{equation}
|\{ y\in \R^d\,: \, \widehat{\varphi}(y)=0\}|\geq \sum_{m^*\in M^*} (|\SR|-1)=+\infty.
\end{equation}
The last equality is due to the fact that $M^*$ is infinite and $|\SR|>1$ (since $M\neq\Z^d$).

If $M$ is not discrete, by Theorem \ref{soporte}, $|\{ y\in \SR+m^*\,: \, \widehat{\varphi}(y)= 0\}|=+\infty,$ hence $|\{ y\in \R^d\,: \, \widehat{\varphi}(y)=0\}|=+\infty$.
\end{proof}

It is known that on the real line, the SIS generated by a function $\vp$ with compact support can only be invariant
under integer translations. That is, $t_x\vp\notin S(\vp)$ for all $x\in\R\setminus\Z$. The following proposition extends this result to $\R^d$.

\begin{proposition}
If a nonzero function $\varphi\in L^2(\R^d)$ has compact support, then $S(\varphi)$
is not $M$-invariant for any $M$ closed subgroup of $\R^d$ such that $\Z^d\subsetneqq M$.
In particular,
\begin{equation}\label{trasl}
t_x\vp\notin S(\varphi)\quad\forall\,x\in\R^d\setminus \Z^d.
\end{equation}
\end{proposition}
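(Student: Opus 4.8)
The plan is to derive the claim directly from Corollary~\ref{vanish}, which already captures the essential mechanism: if $S(\varphi)$ were $M$-invariant for some closed subgroup $M$ with $\Z^d \subsetneqq M$, then $\widehat{\varphi}$ would have to vanish on a set of infinite Lebesgue measure. The strategy is therefore to show that a compactly supported nonzero $\varphi$ cannot have this property, because its Fourier transform $\widehat{\varphi}$ is (the restriction of) an entire function of exponential type and hence cannot vanish on a large set.

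First I would argue by contradiction. Suppose $S(\varphi)$ is $M$-invariant for some closed subgroup $M$ with $\Z^d \subsetneqq M$. Since $\varphi$ has compact support, $\varphi \in L^1(\R^d) \cap L^2(\R^d)$, so $\widehat{\varphi}$ is given by the absolutely convergent integral defining the Fourier transform, and by the Paley--Wiener theorem $\widehat{\varphi}$ extends to an entire function on $\C^d$ of exponential type. The key analytic input is that a nonzero real-analytic (indeed entire) function on $\R^d$ cannot vanish on a set of positive Lebesgue measure: its zero set is a proper real-analytic variety and hence has Lebesgue measure zero. Since $\varphi$ is nonzero, $\widehat{\varphi}$ is a nonzero entire function, so $\{y \in \R^d : \widehat{\varphi}(y) = 0\}$ has measure zero. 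This contradicts Corollary~\ref{vanish}, which forces $\widehat{\varphi}$ to vanish on a set of infinite measure. Hence no such $M$ can exist.

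For the displayed consequence~(\ref{trasl}), I would argue as follows. Fix $x \in \R^d \setminus \Z^d$ and suppose, for contradiction, that $t_x\varphi \in S(\varphi)$. Recall from the definition~(\ref{M}) of the invariance set that the set $M$ of all vectors leaving $S(\varphi)$ invariant is, by Proposition~\ref{M-isgroup}, a closed subgroup of $\R^d$ containing $\Z^d$; moreover $x \in M$ since $S(\varphi)$ is principal and one checks invariance on the single generator $\varphi$. Because $x \notin \Z^d$, this $M$ strictly contains $\Z^d$, so $S(\varphi)$ is $M$-invariant for a subgroup $M$ with $\Z^d \subsetneqq M$, contradicting the first part of the proposition. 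Therefore $t_x\varphi \notin S(\varphi)$ for every $x \in \R^d \setminus \Z^d$.

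The main obstacle is the analytic fact that a nonzero entire function of exponential type cannot vanish on a set of infinite (in fact even positive) Lebesgue measure in $\R^d$; this is what converts the compact-support hypothesis into the quantitative non-vanishing statement needed to contradict Corollary~\ref{vanish}. In one variable this is immediate since the zeros of a nonzero analytic function are isolated, but in several variables one must invoke the fact that the zero set of a nontrivial real-analytic function is a null set. Everything else is bookkeeping: verifying that $x$ indeed lands in the invariance group $M$ and that $M \supsetneq \Z^d$, both of which follow directly from the definitions and from Proposition~\ref{M-isgroup}.
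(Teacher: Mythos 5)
Your proof is correct and follows essentially the same route as the paper: the first claim is deduced from Corollary~\ref{vanish} (you usefully spell out the Paley--Wiener/real-analyticity argument that the paper leaves implicit as ``straightforward''), and (\ref{trasl}) is obtained by contradiction exactly as in the paper, the only cosmetic difference being that you invoke the full invariance group of Proposition~\ref{M-isgroup} where the paper takes the closed subgroup generated by $x$ and $\Z^d$. Both versions hinge on the same fact, noted after (\ref{M}), that membership of $x$ in the invariance set can be checked on the single generator $\varphi$.
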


\begin{proof}
The first part of the proposition is a straightforward consequence of Corollary \ref{vanish}.
To show (\ref{trasl}), take $x\in\R^d\setminus \Z^d$ and suppose that $t_x\vp\in S(\varphi)$.
If $M$ is the closed subgroup generated by $x$ and $\Z^d$, then $S(\vp)$ must be $M$-invariant, which is a contradiction.

\end{proof}

As a consequence of Theorem \ref{soporte}, in case that $M=\R^d$, we obtain the following corollary.

\begin{corollary}
If $\varphi\in L^2(\R^d)$ and $S(\varphi)$ is $\R^d$-invariant, then $$|\textnormal{supp}{(\widehat{\varphi})}|\leq 1.$$
\end{corollary}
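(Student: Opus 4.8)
The plan is to apply Theorem~\ref{soporte} directly, in the special case $M=\R^d$. First I would record the relevant objects. Since $S(\varphi)$ is a principal SIS, it is finitely generated with a single generator, so $\ell=1$ and $\vp_1=\varphi$. Next I would compute the dual group of $M=\R^d$: for any $x\neq 0$ one can choose $m\in\R^d$ with $\langle x,m\rangle\notin\Z$ (for instance $m=\tfrac{1}{2}x/\langle x,x\rangle$), so $M^*=\{0\}$. Consequently $\R^d/M^*\cong\R^d$, and a measurable section $\SR'$ of $\R^d/M^*$ may be taken to be all of $\R^d$.

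Then I would invoke Theorem~\ref{soporte} with this data and $h=1$. Because $\ell=1$, the sets $E_j$ are defined only for $j=0,1$, and the estimate reads
$$|\{y\in\R^d:\widehat{\varphi}(y)\neq 0\}|\ \le\ \sum_{j=0}^{1} j\,|E_j|\ =\ |E_1|\ \le\ 1.$$
Finally, observing that the Lebesgue measure of $\supp(\widehat{\varphi})$ coincides with the measure of the set $\{y:\widehat{\varphi}(y)\neq 0\}$ (the two differ by a null set), this yields $|\supp(\widehat{\varphi})|\le 1$, as claimed.

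There is no serious obstacle here, as the corollary is essentially a specialization of Theorem~\ref{soporte}. The only points requiring attention are the identification $M^*=\{0\}$, which forces the section $\SR'$ to be the whole space so that the left-hand side of the estimate becomes the full measure of the support, and the observation that the $\R^d$-invariance of $S(\varphi)$ is exactly the hypothesis needed to apply the theorem with $M=\R^d$ and $\ell=1$.
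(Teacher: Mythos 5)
Your proof is correct and is exactly the paper's intended argument: the corollary is stated there as a direct consequence of Theorem~\ref{soporte} with $M=\R^d$, and your specialization (noting $M^*=\{0\}$, so the section $\SR'$ can be taken to be all of $\R^d$, and $\ell=1$) fills in precisely the details the paper leaves implicit.
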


Let $M$ be a closed subgroup of $\R^d$ containing $\Z^d$. The next theorem, states that there exists an M-invariant space $S$ that is {\it not} invariant under any  vector outside $M$. We will say in this case that $S$ is {\it exactly} $M$-invariant.

Note that because of Proposition  \ref{M-isgroup},  an M-invariant space is exactly $M$-invariant if and only if it is not invariant under any closed subgroup $M'$ containing $M.$

\begin{theorem}
For each closed subgroup $M$ of  $\R^d$ containing $\Z^d$, there exist a shift-invariant space of $L^2(\R^d)$
which is exactly $M$-invariant.
\end{theorem}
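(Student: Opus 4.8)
The plan is to construct, for each closed subgroup $M$ containing $\Z^d$, an explicit principal shift-invariant space $S=S(\varphi)$ and verify that its invariance set equals exactly $M$. Using Remark \ref{TL} and the description of $M^*$ via the invertible transformation $T$, it suffices to produce a generator $\varphi$ whose Fourier transform is the indicator of a carefully chosen subset $A\subset\R^d$, that is, $\widehat{\varphi}=\chi_A$. The idea is to make $A$ meet each coset of $M^*$ in a controlled way so that the extra $M$-invariance is forced by Theorem \ref{rango-SIS-M-2}, while simultaneously ensuring that $A$ is \emph{not} periodic with respect to any larger dual group, which by duality (Proposition \ref{propM*}) will block invariance under any subgroup strictly containing $M$.

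The key steps, in order, are as follows. First I would transport the problem to the standard model: by Remark \ref{TL} it is enough to treat $M_0=\frac{1}{a_1}\Z\times\dots\times\frac{1}{a_q}\Z\times\R^{d-q}$ with dual $M_0^*=a_1\Z\times\dots\times a_q\Z\times\{0\}^{d-q}$, since the general case follows by applying the unitary change of variables induced by $T^*$ (which carries SIS to SIS and preserves the invariance structure). Second, I would choose a set $A$ that is a union of translates of the section $\SR$ of $\R^d/M_0^*$ from (\ref{R-def}), arranged so that $\chi_A$ is $M_0^*$-periodic: this guarantees, via the converse direction of Theorem \ref{rango-SIS-M-2}, that $S(\varphi)$ \emph{is} $M_0$-invariant. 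Third, and this is where the construction must be delicate, I would pick $A$ so that $\chi_A$ fails to be $(M')^*$-periodic for every closed subgroup $M'$ with $M_0\subsetneq M'$; concretely one wants the ``pattern'' of $A$ inside a fundamental domain to be aperiodic enough that no finer period survives. Finally I would invoke Corollary \ref{vanish} and Theorem \ref{soporte} together with the $M$-invariance characterizations of Section \ref{sec-4} (in particular Theorem \ref{fibers} and Remark \ref{fibra-cut-off}) to certify that any extra invariance of $S(\varphi)$ would force an additional periodicity of $A$ that the construction rules out.

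The main obstacle will be the third step: exhibiting a single set $A$ whose indicator is exactly $M_0^*$-periodic and no more. Because the candidate larger subgroups $M'$ form a whole family (they correspond, via Theorem \ref{como-es-M}, to choices of smaller invariant factors or to enlarging the subspace part $\R^{d-q}$), I must defeat all of them at once. The cleanest way I would try is to reduce the fiber picture: by Theorem \ref{fibras-en-S} and Remark \ref{fibra-cut-off}, extra invariance under some $M'\supsetneq M$ is equivalent to a statement about which fibers $(\widehat{\varphi^{\s}})_\omega$ lie in $J_{S(\varphi)}(\omega)$; since $S(\varphi)$ is principal its fiber spaces are one-dimensional, so I need $A$ to arrange that for each candidate $M'$ there is a positive-measure set of $\omega$ where the corresponding $M'$-cut-off fiber escapes the span of $\widehat{\varphi}_\omega$. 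I expect this to be achievable by letting $A$ occupy, within the fundamental domain $\W+\s$ over a fixed section $\SN$ of $\Z^d/M^*$, a collection of ``cells'' indexed by $\s$ that differ from one $\s$ to the next, so that translating by any vector outside $M$ necessarily maps some occupied cell onto an unoccupied one and thereby produces a fiber not proportional to the original.

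Once the set $A$ is in hand, the verification is routine: $M_0$-invariance is immediate from the $M_0^*$-periodicity of $\chi_A$ via Theorem \ref{rango-SIS-M-2}, and exact invariance follows because, by the closing remark preceding the theorem together with Proposition \ref{M-isgroup}, it suffices to rule out invariance under each closed subgroup $M'$ strictly containing $M$, which the aperiodicity of $A$ does. Pulling back through $T^*$ then yields an $S$ that is exactly $M$-invariant, completing the proof.
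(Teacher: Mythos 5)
Your plan breaks down at its core mechanism, in both directions. First, $M^*$-periodicity of $\chi_A=\widehat\varphi$ does \emph{not} guarantee that $S(\varphi)$ is $M$-invariant: the converse part of Theorem \ref{rango-SIS-M-2} only places the functions $\eta\widehat\varphi$ (with $\eta$ $M^*$-periodic) inside the \emph{larger} space $S_{M}(\varphi)$, whereas $M$-invariance of $S(\varphi)$ is the reverse inclusion $S_{M}(\varphi)\subseteq S(\varphi)$. Concretely, for $d=1$, $M=\tfrac12\Z$, $M^*=2\Z$, take $A=\bigl([0,\tfrac12)\cup[1,\tfrac32)\bigr)+2\Z$. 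Then $\chi_A$ is $2\Z$-periodic, but for a.e.\ $\w\in[0,\tfrac12)$ the fiber $\widehat\varphi_\w$ is supported on all of $\Z$, hence meets both cosets $2\Z$ and $1+2\Z$; so $\dim_{U_0}(\w)+\dim_{U_1}(\w)=2>1=\dim_S(\w)$, and $S(\varphi)$ is not $\tfrac12\Z$-invariant by Theorem \ref{fibers}. What $M$-invariance of a principal SIS actually requires (Theorems \ref{teo-subs} and \ref{fibers}) is that almost every fiber of $\widehat\varphi$ be supported in a \emph{single} coset of $M^*$ inside $\Z^d$ --- a condition on how $A$ sits over $\W$, not periodicity. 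Second, and worse, your third step is impossible: if $M\subseteq M'$ then $(M')^*\subseteq M^*$ by Proposition \ref{propM*}, so \emph{every} $M^*$-periodic set is automatically $(M')^*$-periodic. A set ``exactly $M^*$-periodic and no more'' in your sense cannot exist; you have the duality backwards, since enlarging $M$ \emph{shrinks} $M^*$. Nor is periodicity of the support even the right invariant: $\widehat\varphi=\chi_{[0,1)}$ generates a translation-invariant space although $[0,1)$ is not $2\Z$-periodic. Finally, the construction you sketch (cells ``that differ from one $\s$ to the next'') is never exhibited, and if the occupied cells in different $\s$-tiles overlap when projected into $\W$, it destroys the $M$-invariance you need, by the fiber criterion above.

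For contrast, the paper's proof uses the simplest admissible support: $\supp(\widehat\vp)=B_0=\W+M^*$, so that $U_0=S(\vp)$ and $U_{\s}=\{0\}$ for $\s\neq0$, giving $M$-invariance by Theorem \ref{teo-subs}. For a closed subgroup $M'$ with $M\subsetneqq M'$, the associated partition $\{B'_{\gamma}\}$ is strictly finer, and one tests the cut-off $g$ with $\widehat g=\widehat\vp\,\chi_{B'_0}$: if $g\in S(\vp)$ then $\widehat g=\eta\widehat\vp$ with $\eta$ $\Z^d$-periodic, and since $\widehat g$ vanishes on $B'_h\supseteq\W+h$ for any $h\in M^*\setminus(M')^*$ while $\widehat\vp$ does not, $\eta$ vanishes on $\W+h$; as $h\in\Z^d$, periodicity forces $\eta=0$ a.e., contradicting $\widehat g\neq0$. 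Note that this $B_0$ is $(M')^*$-periodic for every $M'\supseteq M$, so the failure of $M'$-invariance comes from the rigidity of $\Z^d$-periodic multipliers on $B_0$, not from any aperiodicity of the support --- which is precisely the phenomenon your plan was designed around and which does not exist.
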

\begin{proof}

Let $M$ be a subgroup of $\R^d$ containing $\Z^d$. We will construct a principal shift-invariant space that is exactly $M$-invariant.

Suppose that  $0\in\SN$ and take $\vp\in L^2(\R^d)$ satisfying $\supp(\widehat{\vp})=B_0$, where $B_0$ is defined as in (\ref{B-sigma}). Let $S=S(\varphi)$.

Then, $U_0=S$ and $U_{\s}=\{0\}$ for $\s\in\SN$, $\s\neq 0$. So,  as a consequence of Theorem \ref{teo-subs}, it follows that $S$ is $M$-invariant.

Now, if $M'$ is a closed subgroup such that $M\subsetneqq M'$, we will show that $S$ can not be $M'$-invariant.

Since $M\subset M'$, $(M')^*\subset M^*$. Consider  a section $\mathcal{H}$ of the quotient $M^*/(M')^*$ containing the origin. Then,
the set given by $$\SN':=\{\s+h\,:\,\s\in\SN,\, h\in \mathcal{H}\},$$
is a section of $\Z^d/(M')^*$ and $0\in\SN'$.

If $\{B'_{\gamma}\}_{\gamma\in\SN'}$ is the partition defined in (\ref{B-sigma}) associated to $M'$, for each $\s\in\SN$
it holds that $\{B'_{\s+h}\}_{h\in\mathcal{H}}$ is a partition of $B_{\s}$, since
\begin{equation}\label{B'parteB}
B_{\s}=\W+\s+M^*=\bigcup_{h\in\mathcal{H}}\W+\s+h+(M')^*=\bigcup_{h\in\mathcal{H}}B'_{\s+h}.
\end{equation}

We will show now that $U'_0\nsubseteq S,$ where $U'_0$ is the subspace defined in (\ref{U-sigma}) for $M'$.
Let $g\in L^2(\R^d)$ such that $\widehat{g}=\widehat{\vp}\chi_{B'_0}$. Then $g\in U'_0.$ 
Moreover, since $\supp(\widehat{\vp})=B_0$, by (\ref{B'parteB}), $\widehat{g}\neq 0$. 

Suppose that $g\in S$, then $\widehat{g}=\eta\widehat{\vp}$ where $\eta$ is a $\Z^d$-periodic function.
Since $M\subsetneqq M'$, there exists $h\in\mathcal{H}$ such that $h\neq 0$.
By (\ref{B'parteB}), $\widehat{g}$ vanishes in $B'_h$.  Then, the $\Z^d$-periodicity of $\eta$ implies that $\eta(y)=0$ a.e. $y\in\R^d$. So $\widehat{g}=0$, which is a contradiction.

This shows that $U'_0\nsubseteq S$. Therefore, $S$ is not $M'$-invariant.

\end{proof}

\thispagestyle{empty}


\begin{thebibliography}{99}
\addcontentsline{toc}{chapter}{Bibliograf\'\i a}

\bibitem[ACHKM10]{ACHKM} Aldroubi, A., Cabrelli, C., Heil, C., Kornelson, K., Molter, U., {\it Invariance of a Shift-Invariant Space},  J. Fourier Anal. and Appl. \textbf{16}(1), 60-75 (2010).



\bibitem[BDVR94]{BDVR} de Boor, C., DeVore, R. A., Ron, A., {\it The structure of finitely generated shift-invariant spaces in $L^2(\R^d)$}, J. Funct. Anal. $\textbf{119}$(1), 37-78 (1994).

\bibitem[BDR94]{BDR94}	de Boor, C., Devore, R. A., Ron, A.,  {\it Approximation from shift-invariant subspaces of $L^2(\R^d)$}, Trans. Am. Math. Soc. \textbf{341}(2), 787-806 (1994).


\bibitem[BK06]{BK06} Bownik, M., Kaiblinger, N., {\it Minimal generator sets for finitely generated shift-invariant subspaces of $L^2(\mathbb{R}^n)$}, J. Math. Anal. Appl. \textbf{313}(1), 342-352 (2006).


\bibitem[Bou74]{Bou74} Bourbaki, N., {\it \'El\'ements de math\'ematique. Topologie g\'en\'erale. 
Chapitres 5 \`a 10},
    Reprint of the 1974 original, Berlin: Springer. 336~p. (1974).




\bibitem[Bou81]{Bou81} Bourbaki, N., {\it \'El\'ements de math\'ematique. Alg\`ebre. Chapitres 4 \`a 7},
Paris etc.: Masson. VII, 422 p. (1981).


\bibitem[Bow00]{Bow00} Bownik, M., {\it  The Structure of Shift-Invariant Subspaces of $L^2(\R^d)$}, 
  J. Funct. Anal. \textbf{177}(2), 282-309 (2000).


\bibitem[Hel64]{Hel64}
Helson, H., {\it Lectures on Invariant Subspaces},  New York and London: Academic Press. XI, 130 p. (1964).


\bibitem[HL05]{HL05} Hogan, J.A., Lakey, J.D., {\it Non-translation-invariance in principal
shift-invariant spaces},  Begehr, H. G. W. (ed.) et al., Advances in analysis. Proceedings of the 4th
international ISAAC congress, Toronto, Canada, August 11--16, 2003. Hackensack, NJ: World Scientific. 471-485 (2005).



\bibitem[HR79]{HR79} Hewitt, E., Ross, K.A., {\it Abstract harmonic analysis Vol. I: Structure of topological groups
Integration theory. Group representations}, Die Grundlehren der mathematischen Wissenschaften. 115. Berlin-
G\"ottingen-Heidelberg: Springer-Verlag. VIII, 519 p. (1963).



\bibitem[RS95]{RS} Ron, A., Shen, Z., {\it Frames and Stable Bases for Shift-Invariant Subspaces of $L^2(\R^d)$},  Can. J. Math. \textbf{47}(5), 1051-1094 (1995).


\bibitem[Sri64]{Sri64} Srinivasan, T.P., {\it  Doubly invariant subspaces}, Pac. J. Math. \textbf{14}, 701-707 (1964).

\end{thebibliography}
\end{document}